%
%
%
%
%
%
\RequirePackage{fix-cm}
\RequirePackage{amsmath}
\documentclass[smallextended]{svjour3}       
\smartqed  

\usepackage{graphicx}
\usepackage{amsfonts}
\usepackage{amssymb}
\usepackage{latexsym}
\usepackage{mathtools}
\mathtoolsset{showonlyrefs}
\usepackage{listings}
\usepackage{graphicx}
\usepackage{color}
\usepackage{xcolor}
\usepackage{fullpage}
\usepackage{hyperref}
\usepackage{braket}
\usepackage{mathrsfs} 
\usepackage{bm}

\newcommand{\mf}{\mathfrak}
\newcommand{\serie}[1]{\{#1_{n}\}_n}

\def\C{\mathbb{C}}

\def\rank{\mathrm{rank}}

\newcommand{\lowbiglquote}[1][18]{%
   \setbox0=\hbox{\fontsize{#1}{0}\selectfont``}%
   \setlength{\dimen0}{\ht0 - \heightof{A}}%
   \noindent\llap{\smash{\lower\dimen0\box0 }}}
 
\newcommand{\lowbigrquote}[1][18]{%
   \setbox0=\hbox{\fontsize{#1}{0}\selectfont''}%
   \setlength{\dimen0}{\ht0 - \heightof{A}}%
   \unskip\rlap{\smash{\lower\dimen0\box0 }}}

\newcommand{\vf}{\varphi}
\newcommand{\ve}{\varepsilon}

\newcommand{\f}{\mathbb}
\newcommand{\cu}{\subseteq}
\newcommand{\appl}[2]{\langle #1,#2\rangle}


\spnewtheorem{theorem}{Theorem}[section]{\bfseries}{\itshape}

\spnewtheorem{lemma}[theorem]{Lemma}{\bfseries}{\itshape}
\spnewtheorem{definition}[theorem]{Definition}{\bfseries}{\itshape}
\spnewtheorem{remark}[theorem]{Remark}{\bfseries}{\upshape}
\spnewtheorem{example}[theorem]{Example}{\bfseries}{\upshape}
\spnewtheorem{corollary}[theorem]{Corollary}{\bfseries}{\itshape}
\spnewtheorem{proposition}[theorem]{Proposition}{\bfseries}{\itshape}
\spnewtheorem{algorithm}[theorem]{Algorithm}{\bfseries}{\upshape}

\numberwithin{theorem}{section}

\begin{document}
	
	\title{The limit empirical spectral distribution of complex matrix polynomials}
	
	\titlerunning{The empirical spectral distribution of complex matrix polynomials}        
	
	\author{Giovanni Barbarino        \and
		Vanni Noferini 
	}
	
	\institute{G. Barbarino \at
		Department of Mathematics and Systems Analysis, Aalto University, Finland.
		\email{giovanni.barbarino@aalto.fi}    
		\and
		V. Noferini \at
		Department of Mathematics and Systems Analysis, Aalto University, Finland.
		\email{vanni.noferini@aalto.fi}  
	}
	
	\date{Preprint of an article published in Random Matrices: Theory and Applications, 
		https://doi.org/10.1142/S201032632250023X
		 © [copyright World Scientific Publishing Company] https://www.worldscientific.com/worldscinet/rmta
	}

	\maketitle
	
	\begin{abstract}
		We study the empirical spectral distribution (ESD) for complex $n \times n$ matrix polynomials of degree $k$ under relatively mild assumptions on the underlying distributions, thus highlighting universality phenomena. In particular, we assume that the entries of each matrix coefficient of the matrix polynomial have mean zero and finite variance, potentially allowing for distinct distributions for entries of distinct coefficients. We derive the almost sure limit of the ESD in two distinct scenarios: (1)  $n \rightarrow \infty$ with $k$ constant and (2) $k \rightarrow \infty$ with $n$ bounded by $O(k^P)$ for some $P>0$; the second result additionally requires that the underlying distributions are continuous and uniformly bounded. Our results are universal in the sense that they depend on the choice of the variances and possibly on $k$ (if it is kept constant), but not on the underlying distributions. The results can be specialized to specific models by fixing the variances, thus obtaining matrix polynomial analogues of results known for special classes of scalar polynomials, such as Kac, Weyl, elliptic and hyperbolic polynomials.  
		
		\keywords{random matrix polynomial \and empirical spectral distribution \and polynomial eigenvalue problem \and companion matrix \and logarithmic potential \and universality}
		\subclass{60B20 \and 15B52 \and 15A15 \and 65F15}
	\end{abstract}

\section{Introduction}\label{sec:introduction}

Three famous classes of classic results in probability theory concern the distribution of the roots of random scalar polynomials and the distribution of the eigenvalues of random matrices or random pencils. For example, it is known that, when we take the limit $k\to\infty$ for degree $k$ random polynomials with i.i.d.\ coefficients, we find that the roots tend to concentrate uniformly on the unit circle.  On the other hand, when the entries of an $n \times n$ matrix are i.i.d.\ random variables with mean $0$ and variance $n^{-1}$, and in the limit $n \rightarrow \infty$, then the eigenvalues are uniformly distributed on the unit disk. The case of of a random pencil has also been studied, at least in the case of normally distributed entries: under this assumption, the distribution is uniform on the Riemann sphere.

There exist plenty of results on the asymptotic eigenvalue distribution of random matrices (for example 
\cite{BO11,BCC08,LN09,DS07,GK05,GO04,TaoBook,TV08,TVK10,TI13}%
)
and on the  asymptotic root distribution of random polynomials (for example
\cite{BK19,BS86,BBL92,IZ13,IZ97,KZ14,PY15,SV95,TV14}%
). Much effort has also been devoted to studying the phenomenon of universality, i.e., proving that limit distributions are independent of the exact distributions of the coefficients or the entries.

 Eigenvalues of matrix polynomials can be seen as generalizations both of roots of scalar polynomials and of eigenvalues of matrices and pencils. They are of interest in many application, especially in engineering. Yet, the eigenvalue distribution of random matrix polynomial has not received as much attention except in the extremal cases discussed above, i.e., when either the size is $1$ (scalar polynomials) or the degree is $1$ (matrices or pencils). The main motivation for this paper is to fill this gap by considering the general case of $n \times n$ random matrix polynomials of degree $k$. We obtain results that we believe to be interesting per se in the context of random matrix theory, and in particular we find that the limit empirical distributions associated to polynomial eigenvalue problems exhibit universality, as their classical eigenvalue problem counterparts. Moreover,  they may also have some practical relevance, e.g., in the design of numerical experiments based on random coefficients when testing algorithms for solving polynomial eigenvalue problems.

To be more concrete, recall that a square matrix polynomial is an algebraic expression of the form
\begin{equation}\label{eq:matpoly}
P(z) = \sum_{j=0}^k C_j z^j
\end{equation}
where $C_0,C_1,\dots,C_k \in \C^{n \times n}$.
A finite eigenvalue of $P(z)$ is \cite{Dopico2018,LN20} a number $\lambda \in \C$ such that
\[  \rank _\C P(\lambda) <  \rank_{\C(z)} P(z). \] The polynomial eigenvalue problem (PEP) is to  find all such eigenvalues \cite{AT12,DLPVD18,Dopico2018,GLRBook,GT17,LN20,NP15,TM01}, and possibly other related objects (e.g. eigenspaces, minimal indices, infinite eigenvalues) that are not relevant to this paper. PEPs arise in several areas of applied and computational mathematics, including but not limited to control theory, structural engineering, vibrating systems, fluid mechanics, and acoustics \cite{GLRBook,GT17,TM01}.  If the characteristic polynomial $\det P(z)$ is not the zero polynomial, then the finite eigenvalues of $P(z)$ coincide with the roots of $\det P(z)$. A sufficient condition for this to happen is that $\det C_k \neq 0$: in this case, there are exactly $nk$ finite eigenvalues when counted with multiplicities. Under these assumptions, the Empirical Spectral Distribution (ESD) of the matrix polynomial $P(z)$ is defined \cite{BV20} as the atomic probability measure
\[
\mu_P = \frac 1{nk}\sum_{i=1}^{nk} \delta_{\lambda_i},
\]
where $\lambda_1,\dots,\lambda_{nk}$ are the finite eigenvalues of $P(z)$ counted with their multiplicity and $\delta_{\lambda_i}$ is the atomic probability measure with a single atom at $\lambda_i$. 
When we introduce randomicity in the coefficients $C_j$, the ESD becomes a random variable with values in the space of (atomic) probability measures, and one can investigate the average of $\mu_P$, or whether they converge to some known distribution when the size $n$ or the degree $k$ tend to infinity. 

Combining matrix polynomials with probability theory is not a complete novelty. For instance, probabilistic methods have been used to analyze the condition number of PEPs. The expected condition number of Gaussian random PEPs was computed by Armentano and Beltr\'{a}n  \cite{AB19} in the case of complex coefficients and by Beltr\'{a}n and Kozhasov \cite{BK19} for real coefficients. Lotz and Noferini \cite{LN20} performed a probabilistic condition analysis of PEPs by imposing that perturbations are uniformly distributed on the unit sphere. On the other hand, some previous results on the distribution of the eigenvalues preceded the ones derived in this paper. As mentioned above, the average eigenvalue distribution for Gaussian pencils has been derived independently by many authors, e.g., \cite{BA12,EKS94,houghzeros}. Pagacz and Wojtylak \cite{PW20} assessed how the spectral distribution of random matrix polynomials changes under a low rank perturbation via the resolvent method. Finally, the same authors of the present paper were recently able to obtain the asymptotic distribution of monic Gaussian complex matrix polynomials \cite{BV20}.


Here, we address instead non-monic matrix polynomials, and this allows us to greatly relax the assumptions on the underlying distribution with respect to \cite{BV20}, thus obtaining results with crisp flavours of universality. Indeed, we can even allow the distribution of the coefficient $C_j$ to depend on the index $j$. Hence, we derive fairly general results that can be specialised to different types of random matrix polynomials, such as Kac, Weyl, elliptic and hyperbolic polynomials. 
In particular we show that, at least if the distributions involved are continuous and bounded (this assumption is actually only needed when studying the limit $k \rightarrow \infty$),  the limit distributions can be computed whenever the matrix-valued random variables involved have zero mean and finite variance, and the result only depends on the variances and possibly on $k$ (if $k$ is kept constant while $n \rightarrow \infty$).
We express 
the limit empirical eigenvalue distribution of  square matrix polynomials of size $n$ and degree $k$, with all coefficient being complex random matrices, 
in two different regimes: when $n \rightarrow \infty$ with $k$ constant, and when $k \rightarrow \infty$ with $n$ bounded by $k^P$ for some $P>0$ (which in particular includes the case where $n$ is constant)

Below we state the two main results of the paper.
\begin{theorem} \label{thm:esdmonicninf}
	Consider a random vector $[X_0, X_1, \dots, X_k]$ where all $X_j$ are independent random variables with zero mean and unit variance, not necessarily with the same distribution.
	Let $\alpha_0,\dots,\alpha_k$ be complex constants with $\alpha_k\ne 0$. 
	Suppose $C_j$ are $n\times n$ random matrices whose entries are independent copies of $X_j$, and construct the  random matrix polynomial
	\begin{equation}
	P_n(z) =\sum_{j=0}^{k} C_j \alpha_j z^j.
	\end{equation} 
	Call $p$ the smallest index such that $\alpha_p\ne 0$. 	
	Then, for $n \rightarrow \infty$, the empirical spectral distribution of $P_n(z)$ converges almost surely to 
	$$
	\frac pk \delta_0 + \frac{k-p}{k} \mu
	$$
	where $\delta_0$ is the atomic probability measure with a single atom at zero, and $\mu$ is the probability measure with density 	\begin{equation*}
	f(z) = 
	\frac 1{4k\pi}\Delta_z\ln\left(  \sum_{i=p}^{k} |\alpha_i|^2 |z|^{2i-2p} \right).
	\end{equation*}
\end{theorem}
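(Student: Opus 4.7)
The plan is to invoke Girko's Hermitization, i.e., the logarithmic potential method, and read off the limit distribution from the Laplacian of an explicit limit of log-potentials. Using the factorization $\det P_n(z) = \alpha_k^n \det(C_k)\prod_{i=1}^{nk}(z-\lambda_i)$, the log-potential of the ESD admits the identity
\[
U_{\mu_{P_n}}(z) \;=\; \int \log|z-w|\,d\mu_{P_n}(w) \;=\; \frac{1}{nk}\log|\det P_n(z)| \;-\; \frac{1}{k}\log|\alpha_k| \;-\; \frac{1}{nk}\log|\det C_k|.
\]
The goal is to show that $U_{\mu_{P_n}}(z)$ converges almost surely, and in $L^1_{\mathrm{loc}}(\mathbb{C})$, to an explicit $U(z)$; the limit ESD is then $\frac{1}{2\pi}\Delta U$ in the distributional sense.

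The universality input is the following observation: for each fixed $z\in\mathbb{C}\setminus\{0\}$, the entries $(P_n(z))_{a,b} = \sum_{j=0}^k \alpha_j z^j (C_j)_{a,b}$ are i.i.d.\ across $(a,b)$, with mean zero and variance $\sigma^2(z):=\sum_{j=0}^k |\alpha_j|^2 |z|^{2j}$. Thus $P_n(z)$ is an honest i.i.d.\ matrix, and the log-determinant asymptotics associated with the circular law (Tao--Vu, G\"otze--Tikhomirov, Bordenave--Chafai), namely $\frac{1}{n}\log|\det M_n| = \tfrac{1}{2}\log n - \tfrac{1}{2} + o(1)$ a.s.\ for any $n\times n$ i.i.d.\ matrix $M_n$ of mean zero and unit variance, apply to both $P_n(z)/\sigma(z)$ and to $C_k$. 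Substituting into the identity above, the divergent $\tfrac{1}{2k}\log n$ and the additive $-\tfrac{1}{2k}$ cancel, leaving
\[
U_{\mu_{P_n}}(z) \;\xrightarrow{\text{a.s.}}\; U(z) \;=\; \frac{1}{2k}\log\!\Bigl(\sum_{j=0}^k |\alpha_j|^2 |z|^{2j}\Bigr) - \frac{1}{k}\log|\alpha_k|
\]
for each fixed $z\ne 0$.

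To upgrade a.s.\ pointwise convergence to a.s.\ convergence of measures one needs the standard logarithmic-potential-to-measure lift: a Fubini argument converts the pointwise statement into one valid for a.e.\ $z$ on a single full-probability event, after which it suffices to establish local uniform integrability of $\{U_{\mu_{P_n}}\}$ in $z$. This reduces to a lower bound, uniform on compact sets of $z$, for the smallest singular value of $P_n(z)$; at fixed $z\ne 0$ the matrix $P_n(z)$ is truly i.i.d., so Rudelson--Vershynin / Tao--Vu small-ball estimates transfer with constants depending mildly on $z$. Finally, taking $\mu=\frac{1}{2\pi}\Delta U$ and factoring $\sum_j|\alpha_j|^2|z|^{2j} = |z|^{2p}\sum_{i=p}^k|\alpha_i|^2|z|^{2(i-p)}$, the identity $\Delta\log|z|^{2p} = 4\pi p\,\delta_0$ isolates the atomic part $\tfrac{p}{k}\delta_0$, and the remaining absolutely continuous piece has density $f(z)$ as announced.

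The main obstacle is the uniform-integrability / smallest-singular-value step: under only a second-moment hypothesis, ruling out pathologically small singular values of $P_n(z)$ with sufficiently explicit $z$-dependence is the delicate technical point that drives universality. Everything else is essentially bookkeeping around Girko's identity and the scalar circular-law log-determinant asymptotics.
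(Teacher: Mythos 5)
Your overall strategy is the paper's own: hermitize via the logarithmic potential, exploit that for fixed $z$ the evaluation $P_n(z)$ is an honest i.i.d.\ matrix with variance $\sigma^2(z)=\sum_j|\alpha_j|^2|z|^{2j}$, and cancel the divergent $\tfrac1{2k}\log n$ contributions of $P_n(z)$ and $C_k$. (The paper phrases the cancellation slightly differently: instead of quoting the explicit a.s.\ asymptotic $\tfrac1n\log|\det M_n|=\tfrac12\log n-\tfrac12+o(1)$, it shows via a replacement lemma that $\int\ln x\,{\rm d}\nu_{P_n(z)/(\sigma\sqrt n)}-\int\ln x\,{\rm d}\nu_{C_k/\sqrt n}\to0$, using that both empirical singular value distributions converge to the same Marchenko--Pastur law and that $\ln$ is a.s.\ uniformly integrable for them, a second-moment result of Bordenave--Chafai. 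This is cosmetic; your treatment of the atom at $0$ via $\Delta\ln|z|^{2p}=4\pi p\,\delta_0$ is also a legitimate variant of the paper's reduction $P_n(z)=z^pQ_n(z)$.)

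There is, however, a genuine gap in your potential-to-measure step. You claim that local uniform integrability of $\{U_{\mu_{P_n}}\}$ ``reduces to a lower bound, uniform on compact sets of $z$, for the smallest singular value of $P_n(z)$.'' A small-ball bound at fixed $z$ only controls how \emph{negative} $U_{\mu_{P_n}}(z)$ can be; it says nothing about the location of the eigenvalues, and in particular does not prevent a non-negligible fraction of them from escaping to infinity, which would make $U_{\mu_{P_n}}$ large and positive on compacts and could produce a defective (sub-probability) limit after taking Laplacians. The paper's lifting lemma requires, in addition to a.e.\ convergence of the potentials, the a.s.\ tail bound $\limsup_n\int_{|z|\ge1}\ln|z|\,{\rm d}\mu_{P_n}<\infty$, and proving it is a separate nontrivial step (its Lemma \ref{res:log_uia}): one passes to the companion pencil $A_nz-B_n$, uses Weyl's inequalities to bound $\prod_{|\lambda_i|\ge1}|\lambda_i(A_n^{-1}B_n)|$ by singular values, and controls these through uniform boundedness of negative moments of $\nu_{C_k/\sqrt n}$ together with a strong-law bound on $\tfrac1{n^2}\|C_j\|_F^2$. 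Nothing in your sketch supplies this ingredient, and the fixed-$z$ i.i.d.\ structure cannot, since it is a statement about the spectrum of the polynomial (equivalently of $A_n^{-1}B_n$), not about $P_n(z)$ at a single $z$. A secondary point: under only a second-moment hypothesis the Rudelson--Vershynin small-ball machinery you invoke is not available; the paper sidesteps explicit least-singular-value bounds here precisely by citing the a.s.\ uniform integrability of $\ln$ from the circular-law literature, which is stated for mean-zero, unit-variance entries.
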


\begin{theorem}\label{thm:esdmonicninfka}
	Suppose that for any $k$ we have a vector of independent random variables $X_0^{(k)}, X_1^{(k)}, \dots, X_k^{(k)}$ with zero mean, unit variance, 
	and continuous distributions with densities uniformly bounded by a constant $M>0$ not depending on $k$.
	Let also $\alpha_0^{(k)}, \alpha_1^{(k)}, \dots, \alpha_k^{(k)}$
	be sequences of complex numbers, where $\alpha_k^{(k)}\ne 0$ for any $k$. We consider the $n \times n$ matrix polynomial of degree $k$
	\begin{equation}
	P_{n,k}(x) =\sum_{j=0}^{k}\alpha_j^{(k)} C_j^{(k)} x^j,
	\end{equation}
	where, for $j=0,\dots,k$ every coefficient $C_j^{(k)}$ is an $n \times n$ random matrix whose entries are i.i.d. copies of $X_j^{(k)}$. 
	If $n = n(k)=O(k^P)$ for some $P>0$, then the empirical spectral distribution of the random matrix polynomials $P_{n,k}$
	converge almost surely to a probability measure  $\mu$ as $k\to\infty$ when the coefficients $\alpha_j^{(k)}$ satisfy the following conditions:
	\begin{itemize}
		\item $\alpha_k^{(k)}\ne 0$ for every $k$,
		\item for almost every $z$, there exists the limit
		$$U(z) = \lim_{k\to\infty} \frac 1{2k} \ln \left(  \sum_{i=0}^k 	\frac{		|\alpha_i^{(k)}|^2}{|\alpha_k^{(k)}|^2}|z|^{2i}  \right)
		$$
		as a $L^1_{loc}$ function or a distribution,
		\item the probability measure $\mu$ satisfies
		$$ 
		\Delta_z U(z) = 2\pi\mu 
		$$
		in the sense of distributions.
	\end{itemize}
\end{theorem}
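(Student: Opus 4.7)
The plan is to use the logarithmic potential method: I show that the logarithmic potentials of the random ESDs converge almost surely in $L^1_{\mathrm{loc}}(\C)$ to the claimed limit potential $U$, then take distributional Laplacians to deduce weak convergence of the measures. Since the entries of $C_k^{(k)}$ are continuous, $\det C_k^{(k)}\neq 0$ almost surely, so $\det P_{n,k}(z)$ is a polynomial of degree exactly $nk$ with leading coefficient $(\alpha_k^{(k)})^n\det C_k^{(k)}$ and the finite eigenvalues are its roots. Hence the logarithmic potential of $\mu_{n,k}$ can be written as
\begin{equation*}
U_{n,k}(z) := \int \log|z-w|\,d\mu_{n,k}(w) = \frac{1}{nk}\log|\det P_{n,k}(z)| - \frac{1}{k}\log|\alpha_k^{(k)}| - \frac{1}{nk}\log|\det C_k^{(k)}|.
\end{equation*}

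The key decomposition separates the deterministic scale from the random fluctuation. For each fixed $z\in\C\setminus\{0\}$, the entries of $P_{n,k}(z)$ are i.i.d.\ across $(r,s)$ with zero mean and variance $\sigma_k(z)^2:=\sum_{j=0}^k|\alpha_j^{(k)}|^2|z|^{2j}$, so setting $\tilde{M}_{n,k}(z):=\sigma_k(z)^{-1}P_{n,k}(z)$ gives a random matrix with unit-variance i.i.d.\ entries and
\begin{equation*}
U_{n,k}(z)=\frac{1}{2k}\log\frac{\sigma_k(z)^2}{|\alpha_k^{(k)}|^2}+\frac{1}{nk}\log|\det \tilde{M}_{n,k}(z)|-\frac{1}{nk}\log|\det C_k^{(k)}|.
\end{equation*}
The first term converges pointwise to $U(z)$ by hypothesis, and convergence in $L^1_{\mathrm{loc}}$ can be upgraded from the pointwise statement using the growth bound $\sigma_k(z)^2\leq (k+1)\max_j|\alpha_j^{(k)}|^2(1+|z|^{2k})$. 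It therefore suffices to prove that both random log-determinant terms tend to $0$ almost surely in $L^1_{\mathrm{loc}}(\C)$.

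For the upper tail, Hadamard's inequality bounds $\log|\det \tilde M|$ by the sum of the logarithms of the row norms; since rows have i.i.d.\ unit-variance entries, standard moment estimates yield $\log|\det \tilde M|\leq \tfrac{n}{2}\log n+o(n)$, and dividing by $nk$ gives $O(\log n /k)=o(1)$ because $n=O(k^P)$. The lower tail is the main obstacle: one needs a matching lower bound $\log|\det \tilde M_{n,k}(z)|\geq -Cn\log n$ holding with a failure probability summable enough to feed into Borel--Cantelli. This is where the uniform bound $M$ on the densities of the $X_j^{(k)}$ is crucial: together with the fact that the entries of $\tilde M_{n,k}(z)$ are linear combinations of such $X_j^{(k)}$, it yields an entry density bounded uniformly in $z$ and $k$, enabling Rudelson--Vershynin style small-ball and smallest-singular-value bounds. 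Through the identity $|\det M|^2=\prod_{i=1}^n\mathrm{dist}(r_i,V_i)^2$ (rows $r_i$, spans $V_i$ of the other rows), these anti-concentration estimates transfer into the required polynomial lower bound on $|\det M|$, uniformly over the entry distributions.

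The final obstacle is passing from pointwise-in-$z$ estimates to genuine $L^1_{\mathrm{loc}}$ convergence. For this I would fix a compact disk $D\subset\C$, integrate the non-negative error over $D$ via Fubini, and bound the resulting expectation using the tail estimates above (split the integration according to whether the random matrix is "well conditioned" at $z$ or not). A standard $\varepsilon$-net argument in $z$ is \emph{not} needed for $L^1_{\mathrm{loc}}$ convergence, which bypasses any uniformity-in-$z$ issue. Borel--Cantelli then upgrades convergence in probability to almost sure convergence. Tightness of $\{\mu_{n,k}\}$ follows from Cauchy's root bound applied to $\det P_{n,k}(z)$ combined with the variance assumption on the coefficients, so no mass escapes to infinity. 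Once almost-sure $L^1_{\mathrm{loc}}$ convergence $U_{n,k}\to U$ is established, taking distributional Laplacians gives $\mu_{n,k}\to \tfrac{1}{2\pi}\Delta U=\mu$ weakly, completing the proof and confirming at the same time that $\mu$ is a probability measure (since the $\mu_{n,k}$ are and no mass is lost).
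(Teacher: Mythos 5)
Your skeleton is the same as the paper's: write the potential of $\mu_{P_{n,k}}$ as $\frac1{nk}\ln|\det P_{n,k}(z)|$ minus the $C_k^{(k)}$ and $\alpha_k^{(k)}$ terms, normalize $P_{n,k}(z)$ by $\sigma_k(z)$, use the bounded-density lemma for sums to get an entry density bound uniform in $z,k$, control the log-determinants through smallest-singular-value and norm estimates, and finish by taking distributional Laplacians. The genuine gap is in how you pass from these per-$z$ estimates to \emph{almost sure} $L^1_{\mathrm{loc}}$ convergence. Your Fubini-in-expectation argument over a compact disk $D$ yields at best $\mathbb{E}\int_D\bigl|\frac1{nk}\ln|\det\tilde M_{n,k}(z)|\bigr|\,\mathrm{d}z=O(\log k/k)$, hence by Markov a tail bound of order $\log k/(\varepsilon k)$, which is \emph{not} summable in $k$; so the sentence ``Borel--Cantelli then upgrades convergence in probability to almost sure convergence'' is unjustified --- Borel--Cantelli needs summable failure probabilities, and convergence in probability along the full sequence does not give a.s.\ convergence. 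The events that do have summable probability ($O(k^{-2})$, from the smallest-singular-value and norm bounds) are attached to a \emph{fixed} $z$, and making them hold simultaneously for all $z\in D$ is exactly the uniformity-in-$z$ issue you claim to bypass. The paper's route avoids this: it proves, for each fixed $z$, a.s.\ convergence of the potential with $O(k^{-2})$ exceptional probabilities (so genuine Borel--Cantelli), deduces a.s.\ convergence for almost every $z$, and then applies a deterministic lemma (Lemma \ref{res:conv_log}) stating that pointwise a.e.\ convergence of logarithmic potentials together with the uniform control $\limsup_k\int_{|z|\ge 1}\ln|z|\,\mathrm{d}\mu_{A_k^{-1}B_k}<\infty$ forces $L^1_{\mathrm{loc}}$ convergence of the potentials and hence weak convergence of the measures; the local uniform integrability needed for that upgrade is extracted from the log-bound at infinity, not from expectation estimates.

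That log-bound at infinity is your second soft spot. You attribute it to ``Cauchy's root bound combined with the variance assumption on the coefficients,'' but the unit-variance normalization of the entries says nothing about the size of the weights $\alpha_j^{(k)}$. What is actually needed --- and what the paper isolates in Lemma \ref{res:log_uik}, proved via the companion pencil, Weyl's inequalities, interlacing of singular values and the smallest singular value of $C_k^{(k)}/\sqrt n$ --- is the bound $\ln\bigl(\sum_{j}|\alpha_j^{(k)}|^2/|\alpha_k^{(k)}|^2\bigr)=O(k)$, which follows from the assumed existence of $U(z)$, \emph{plus} a lower bound on $|\det C_k^{(k)}|$ (again the bounded-density singular value estimate). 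A Cauchy-root-bound argument can be made to work with these ingredients, but as written it is not complete. Note also that once a.s.\ $L^1_{\mathrm{loc}}$ convergence of the potentials is secured, a separate tightness argument is unnecessary: the theorem's hypothesis already declares $\mu$ a probability measure, so convergence against test functions upgrades to weak convergence without further input; the real role of the control at infinity is the $L^1_{\mathrm{loc}}$ upgrade, which is precisely the step your sketch leaves open.
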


We emphasize that our results can equivalently be interpreted as results on the empirical eigenvalue distribution of certain structured random pencils: indeed, given a  matrix polynomials $P(z)$, a \emph{linearization} of $P(z)$ is a linear matrix polynomial whose eigenvalues (as well as their geometric and algebraic multiplicities) coincide with those of $P(z)$. Matrix polynomials admit many linearizations: see, e.g., \cite{DLPVD18,GLRBook,NP15} and the references therein. One classic and very important linearization is known as the companion pencil, and it  plays an important role in this paper. The other main tool used is the logarithmic potential of the various spectral measures involved, and the related results coming from distribution theory.

The structure of the paper is as follows. 
In Section \ref{sec:preliminaries} we review some necessary background material on matrix polynomial theory, space of distributions, logarithmic potential, probability theory, and random matrix theory. Moreover, we recall the definition of the empirical spectral distribution of a random matrix polynomial with invertible leading coefficient (Definition \ref{def:ESDP}). In Section \ref{sec:n} and Section \ref{sec:k}, we prove our main results, and apply them to classical families of random polynomials. In Section \ref{sec:concl} we draw some conclusions and comment on possible future research directions. With the goal of improving the readibility of the main part of the paper, the proofs of some technical results, needed in Sections \ref{sec:n} and \ref{sec:k}, are postponed to the appendices.

\section{Mathematical background}\label{sec:preliminaries}

Given an $m \times n$ complex matrix $X$, we denote its singular values by $\sigma_1(X) \geq \dots \geq \sigma_{\min}(X) \geq 0$, having introduced the shorthand $\sigma_{\min}(X) :=\sigma_{\min(m,n)}(X) $. The spectral norm of $X$ is denoted by $\| X \| :=\sigma_1(X)$, while the Frobenius norm of $X$ is $\| X \|_F$.

\subsection{Matrix polynomial theory}

Let $P(z)$ be the matrix polynomial defined in \eqref{eq:matpoly}. We review here those notions in the spectral theory of square complex matrix polynomials that are crucial to this paper. Readers interested in more details on matrix polynomial can find further informations in~\cite{AT12,Dopico2018,GLRBook,LN20} and the references therein.
An element $\lambda \in \C$ is said to be a finite eigenvalue of $P(z)$ if
\begin{equation*}
\rank_{\C}(P(\lambda)) < \rank_{\C(x)}(P(z)) =: r,
\end{equation*}
where $\C(z)$ is the field of rational functions with complex coefficients.

If the leading coefficient $C_k$ of the matrix polynomial $P(z)$ in \eqref{eq:matpoly} is invertible, then $P(z)$ is called \textit{regular}
 and it has $kn$ finite eigenvalues that coincide to the roots of the characteristic equation $\det P(z)=0$, counted with their respective multiplicities.  One can then define the \emph{companion matrix of $P(z)$} as (see e.g.  \cite{AT12})
\begin{equation}\label{eq:companionmatrix}
M=\begin{bmatrix}
-C_k^{-1}C_{k-1} & -C_k^{-1}C_{k-2} & \dots & -C_k^{-1}C_{1} & -C_k^{-1}C_{0}\\
I_n & 0 & 0 & \dots & 0\\
0& I_n & 0 & \dots & 0\\
\vdots & \ddots & \ddots & \ddots & \vdots  \\
0 & \dots & 0 & I_n & 0
\end{bmatrix} \in \C^{kn \times kn},
\end{equation}
where $I_n$ and $0$ are, respectively, the $n\times n$ identity and zero matrices. It can be proved that the eigenvalues of $M$, defined in the classical sense, coincide with the eigenvalues of $P(z)$; moreover, they have the same multiplicities. As a consequence, if the leading coefficient $C_k$ is invertible, the PEP for  $P(z)$ is equivalent to the classical eigenvalue problem for the structured matrix $M$. 
One can also work on the associated \emph{companion pencil} of $P(z)$, defined as $Az-B$, where $A$, $B$ are the  $kn\times kn$ complex matrices defined as
\begin{equation}\label{eq:AB_k}
A:= 
\begin{bmatrix}
C_{k} &  & & \\
&I_n & &  \\
&& \ddots  & \\
&& & I_n  
\end{bmatrix} 
,\qquad 
B:=\begin{bmatrix}
-C_{k-1} & \dots & -C_1 & -C_0\\
I_n & & & \\
& \ddots & & \\
& & I_n & 
\end{bmatrix}.
\end{equation}
Once again, $P(z)$ and $Az-B$ have the same eigenvalues with the same multiplicity, and thus the PEP for $P(z)$ is mathematically equivalent to the generalized eigenvalue problem for $Az-B$. Note that, if $A$ in \eqref{eq:AB_k} is invertible, then $M$ in \eqref{eq:companionmatrix} is well defined and $M=A^{-1}B$.

\subsection{Space of Distributions and Logarithmic Potential}
Let $D(\f C):= C_c^{\infty}(\f C)$ be the space of smooth functions with compact support on $\f C$. We equip it with the following notion of  convergence: a sequence $\serie \vf\cu D(\f C)$ converges to $\vf \in D(\f C)$ if 
\begin{itemize}
	\item the union of the supports of $\vf$ and $\vf_n$ for every $n$ is relatively compact,
	\item $\partial^\alpha \vf_n\to \partial^\alpha \vf$ uniformly for every partial derivative $\partial^\alpha$ of any order.
\end{itemize}
The dual space of $D(\f C)$  is the so-called \textbf{Schwartz Distribution Space} $D'(\f C)$.
Here we report only basic notions and results that we will use in the paper. Interested readers may consult \cite{GR08} for a more thorough review.

From now on, the elements of $C_c^{\infty}(\f C)$ will be called test functions.
For a distribution $T$ and a test function $\vf$, the application of $T$ to $\vf$ is denoted by $T(\vf)$. 
We can embed the space of locally integrable functions $L^1_{loc}(\f C)$ into $D'(\f C)$, since for every $g\in L^1_{loc}(\f C)$, the operator
\[
T_g(\vf) := \appl g \vf = \int_{\f C} g(z)\vf(z)\, {{\rm d}}z
\]
is well-defined, linear and continuous.  It is also easy to see that  distinct (equivalence classes, modulo almost everywhere equality, of) functions in  $L^1_{loc}(\f C)$ produce distinct linear operators. The $L^1_{loc}$ convergence coincides with the pointwise convergence of distributions, as shown by the following lemma.
\begin{lemma}\label{res:L1loc_conv_is_dist_conv}
	Suppose that $g_n$ and $g$ are in $L^1_{loc}(\f C)$. Then $g_n\xrightarrow{L^1} g$ on every compact set if and only if $T_{g_n}\to T_g$ pointwise. In this case, we write $g_n\xrightarrow{D'(\f C)} g$.
\end{lemma}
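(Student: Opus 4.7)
My plan is to address the two implications of the equivalence separately. The forward direction, from $L^1_{loc}$ convergence to distributional convergence, is the routine one: for any test function $\vf \in D(\f C)$ whose support lies in a compact set $K$, I would estimate
\[
|T_{g_n}(\vf) - T_g(\vf)| = \left| \int_K (g_n - g)\vf \, \diff{z} \right| \le \|\vf\|_\infty \int_K |g_n - g| \, \diff{z},
\]
and then invoke the assumed $L^1(K)$ convergence to make the right-hand side vanish as $n \to \infty$. This gives pointwise convergence $T_{g_n}(\vf) \to T_g(\vf)$ on $D(\f C)$.

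For the reverse direction, assume $T_{g_n}(\vf) \to T_g(\vf)$ for every $\vf \in D(\f C)$ and fix a compact set $K \subset \f C$. The natural idea is to approximate the indicator $\chi_K$ (or, more pertinently, a sign-weighted version $\mathrm{sign}(g_n - g)\cdot \chi_K$) by smooth compactly supported functions via mollification. Concretely, I would fix an open set $U$ with $K \subset U$ and $\overline U$ compact, and use a family of mollified test functions $\vf_\ve \in D(\f C)$ supported in $U$ that approximates the relevant sign function; pairing against $g_n - g$ and passing the mollification parameter $\ve \to 0$ would then recover $\int_K |g_n - g| \, \diff{z}$ as a limit controlled by the distributional convergence.

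The main obstacle will be that the relevant test function depends on $n$ through $\mathrm{sign}(g_n - g)$, so one cannot directly use pointwise distributional convergence against a single fixed $\vf$. Handling this cleanly requires either an application of the Banach--Steinhaus principle on $D(\f C)$ to first extract a uniform $L^1(K)$ bound on $\{g_n - g\}$ (and from this an equi-integrability argument), or a diagonal choice $\ve = \ve(n)$ balancing the mollification error against the rate of the distributional convergence. This second implication is the technically delicate step of the lemma; the forward direction, by contrast, is essentially immediate and is the one used in the subsequent applications to the logarithmic potential.
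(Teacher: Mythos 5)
Your forward implication is correct and is, in fact, all that the paper ever uses: this lemma is stated as background (with a pointer to the literature, no proof given), and the only place it is invoked is in the proof of Lemma \ref{res:conv_log}, where $U_{\mu_n}\xrightarrow{L^1_{loc}}U$ is upgraded to $U_{\mu_n}\xrightarrow{D'(\f C)}U$ — precisely your easy direction, with exactly the estimate $|T_{g_n}(\vf)-T_g(\vf)|\le \|\vf\|_\infty\int_K|g_n-g|$.

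The genuine gap is the converse, and it is not a matter of technique: the "only if" half is false as stated, so neither the mollified-sign construction nor Banach--Steinhaus nor a diagonal choice of $\ve(n)$ can close it. Take $g_n(z)=\sin(n\,\mathrm{Re}\,z)$ and $g=0$: both are in $L^1_{loc}(\f C)$, and for every $\vf\in D(\f C)$ an integration by parts in the real direction gives $|T_{g_n}(\vf)|\le n^{-1}\|\partial_x\vf\|_{L^1}\to 0$, so $T_{g_n}\to T_g$ pointwise; yet $\int_K|g_n|\to \tfrac 2\pi |K|>0$ on any compact set of positive measure, so there is no $L^1(K)$ convergence. The obstacle you yourself flag — that the relevant test function $\mathrm{sign}(g_n-g)\chi_K$ moves with $n$ — is exactly what this example exploits: pointwise convergence of distributions only tests against fixed smooth functions and is blind to oscillation. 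Banach--Steinhaus does not rescue this, since on $D_K$ it yields a bound of the form $|T_{g_n-g}(\vf)|\le C\sum_{|\alpha|\le N}\|\partial^\alpha\vf\|_\infty$ (control in the dual of $C^N$), which is not an $L^1(K)$ bound on $g_n-g$; and even a uniform $L^1(K)$ bound (which the example enjoys, being bounded in $L^\infty$) together with weak-$*$ convergence does not give norm convergence. So the correct reading of the lemma is the one-sided implication ($L^1_{loc}$ convergence implies distributional convergence); your proof should establish that direction, as you do, and simply not attempt the converse — its failure is harmless for the paper, since only the forward implication is used downstream.
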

Suppose now that $\mu$ is a finite mass measure on $\f C$. Call $T_\mu$ the operator
\[
T_\mu(\vf) := \int_{\f C} \vf(z) {{\rm d}}\mu(z).
\]
$T_\mu$ is well-defined, linear and continuous. As a consequence, all finite mass measures are distributions, and we report in the following lemma that the weak convergence of measures coincides with the pointwise convergence of distributions.
\begin{lemma}\label{res:weak_conv_is_dist_conv}
	Suppose that 
	$\mu_n$ and $\mu$ are probability measures on $\f C$.  $T_{\mu_n}\to T_\mu$ pointwise if and only if
	$\mu_n\to \mu$ weakly.  
	In this case, we write $\mu_n\xrightarrow{D'(\f C)} \mu$.
\end{lemma}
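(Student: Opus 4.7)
The plan is to split the equivalence into the easy forward implication and the more substantial converse. For the forward direction, weak convergence means by definition that $\int f\,\mathrm{d}\mu_n \to \int f\,\mathrm{d}\mu$ for every bounded continuous $f$; since $C_c^\infty(\C) \subset C_b(\C)$, specializing to $f = \varphi$ immediately yields $T_{\mu_n}(\varphi) \to T_\mu(\varphi)$ for every test function, which is exactly pointwise convergence of distributions.

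For the converse, assume $T_{\mu_n} \to T_\mu$ pointwise on $D(\C)$. The first step is to establish uniform tightness of $\{\mu_n\}_n$. Fix $\varepsilon > 0$; by inner regularity of the finite measure $\mu$ there exists $R > 0$ with $\mu(B_R) > 1 - \varepsilon/2$. Construct a smooth cut-off $\varphi_R \in C_c^\infty(\C)$ with $\chi_{B_R} \le \varphi_R \le \chi_{B_{R+1}}$. Then $T_\mu(\varphi_R) \ge \mu(B_R) > 1 - \varepsilon/2$, and by hypothesis $T_{\mu_n}(\varphi_R) \to T_\mu(\varphi_R)$. Consequently, for all sufficiently large $n$ one has $\mu_n(B_{R+1}) \ge T_{\mu_n}(\varphi_R) > 1 - \varepsilon$; the finitely many exceptional indices are absorbed by further enlarging $R$.

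The second step is to upgrade convergence from $C_c^\infty(\C)$ to $C_b(\C)$. Given $f \in C_b(\C)$ and $\varepsilon > 0$, use the tightness just established to fix $R$ with $\mu_n(\C \setminus B_R) < \varepsilon$ for every $n$ (and similarly for $\mu$), and choose $\varphi_R$ as above. The product $f \varphi_R$ lies in $C_c(\C)$; convolving with a standard mollifier yields $\psi \in C_c^\infty(\C)$ with $\|f \varphi_R - \psi\|_\infty$ as small as desired. A triangle-inequality argument then controls $|T_{\mu_n}(f) - T_\mu(f)|$ by a sum of three terms: a truncation error bounded by $2 \|f\|_\infty \varepsilon$ coming from the tails; a mollification error bounded by $2 \|f \varphi_R - \psi\|_\infty$; and the distributional error $|T_{\mu_n}(\psi) - T_\mu(\psi)|$, which vanishes as $n \to \infty$ by hypothesis.

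The main obstacle is precisely the tightness step: without it, the converse implication fails, as witnessed by $\mu_n = \delta_n$ on $\R \subset \C$, which converges to the zero distribution but to no probability measure weakly. The crucial ingredients making the argument go through are that $\mu$ has total mass $1$ and that this mass can be probed arbitrarily well by compactly supported smooth cut-offs, an entirely non-trivial use of the probability measure hypothesis that has no analogue in the companion Lemma \ref{res:L1loc_conv_is_dist_conv} for $L^1_{loc}$ functions.
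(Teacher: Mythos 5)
Your proof is correct. Note that the paper does not actually prove this lemma: it is stated as background material in Section~2.2, with the reader referred to the literature on distribution theory, so there is no internal argument to compare against. Your blind proof supplies the standard argument in full: the forward implication is immediate since $C_c^\infty(\C)\subset C_b(\C)$, and your converse correctly identifies the one genuinely nontrivial point, namely that pointwise convergence of $T_{\mu_n}$ to $T_\mu$ with $\mu$ of total mass $1$ forces uniform tightness of $\{\mu_n\}_n$ (via a smooth cut-off $\chi_{B_R}\le\varphi_R\le\chi_{B_{R+1}}$ and the bound $\mu_n(B_{R+1})\ge T_{\mu_n}(\varphi_R)$), after which the passage from test functions to arbitrary bounded continuous functions by truncation and mollification is routine, with each error term controlled as you state. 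Your counterexample $\mu_n=\delta_n$ correctly illustrates why the probability-measure hypothesis on the limit is essential, and the contrast you draw with Lemma~\ref{res:L1loc_conv_is_dist_conv} is apt. No gaps.
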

Define the Laplace operator on $D'(\f C)$ as follows:
\[
(\Delta T)(\vf) := T(\Delta \vf).
\]
One can see that $\Delta T$ is a well-defined endomorphism of $D'(\f C)$, and that it is linear and continuous.

Assume now  that $\nu$ belongs the space of probability measures such that $z\mapsto \ln|z|$ is integrable in a neighbourhood of infinity. 
Define the \textbf{Logarithmic Potential} of $\nu$, denoted by $U_\nu$, as 
\[
U_\nu(z):= \int_{\f C} \ln|z-z'| \, {{\rm d}}\nu(z').
\]
In this case, 
$U_\nu:\f C\to \f R \cup\{-\infty\}$ since
\[
\int_{|z-z'|> 1} \ln|z-z'| \, {{\rm d}}\nu(z') <\infty.
\]
Since $\ln|\cdot|$ is Lebesgue locally integrable on $\f C$, one can check by using
the Fubini theorem that $U_\nu$ is Lebesgue locally integrable on $\f C$. Moreover, since $\ln|\cdot|$ is the fundamental solution of the Laplace equation in $\f C$, we have the following result.

\begin{lemma}\label{res:laplacian_of_potential_is measure}
	If $z\mapsto \ln|z|$ is integrable in a neighbourhood of infinity with respect to the probability measure $\nu$, then $U_\nu\in L_{loc}^1(\f C)$ is a distribution, and $\Delta U_\nu= 2\pi \nu$.
\end{lemma}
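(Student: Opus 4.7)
The plan is to split the lemma into its two claims and verify each in turn. For local integrability, I fix an arbitrary compact $K\cu\f C$ and apply Tonelli's theorem to the nonnegative integrand $|\ln|z-z'||$:
\[
\int_K |U_\nu(z)|\,dz \;\leq\; \int_{\f C}\int_K |\ln|z-z'||\,dz\,{\rm d}\nu(z').
\]
I would bound the inner integral by splitting it according to whether $|z-z'|\leq 1$ or $|z-z'|>1$. The near part contributes at most $\int_{|w|\leq 1}|\ln|w||\,dw$, a constant independent of $z'$. The far part contributes at most $\vol(K)\ln(|z'|+C_K)$ for a constant $C_K$ depending only on $K$, since $|z-z'|\leq |z'|+C_K$ uniformly for $z\in K$. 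Integrating against the probability measure $\nu$ then yields a finite bound: the constant part pairs with $\nu(\f C)=1$, and the logarithmic part is controlled by the hypothesis that $\ln|z|$ is $\nu$-integrable near infinity. Thus $U_\nu\in L^1_{loc}(\f C)$, and hence $T_{U_\nu}$ is a well-defined distribution.

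For the identity $\Delta U_\nu = 2\pi\nu$, take any test function $\vf\in D(\f C)$ with compact support $K$. By the definition of the distributional Laplacian used in the paper, and then by Fubini (justified because multiplying the estimate above by $\|\Delta\vf\|_\infty$ gives absolute integrability of the double integral),
\[
(\Delta U_\nu)(\vf) \;=\; U_\nu(\Delta\vf) \;=\; \int_{\f C}\int_{\f C}\ln|z-z'|\,\Delta\vf(z)\,dz\,{\rm d}\nu(z').
\]
The key analytic input is the classical fact that $\tfrac{1}{2\pi}\ln|z|$ is the fundamental solution of the Laplace equation on $\R^2$, which in the present form reads $\int_{\f C}\ln|z-z'|\,\Delta\vf(z)\,dz = 2\pi\vf(z')$ for every $z'\in\f C$. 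Substituting, the outer integral collapses to $2\pi\int_{\f C}\vf\,{\rm d}\nu = 2\pi T_\nu(\vf)$, which is precisely $\Delta U_\nu = 2\pi\nu$ in $D'(\f C)$.

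The only nontrivial piece is the fundamental-solution identity itself, which is standard and can be derived by excising a small disk $B(z',\varepsilon)$ from a larger disk containing $K$, applying Green's second identity on the resulting annulus (using $\Delta_z\ln|z-z'|=0$ away from $z'$), and letting $\varepsilon\to 0^+$ so that the boundary integral on $\partial B(z',\varepsilon)$ produces exactly $2\pi\vf(z')$. The main obstacle is therefore not analytic but bookkeeping: ensuring that the two interchanges of integration are rigorously justified and that the near-infinity integrability hypothesis is invoked precisely where it is needed to control the growth of the logarithm at large $|z'|$.
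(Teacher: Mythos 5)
Your proposal is correct and follows essentially the same route the paper sketches: local integrability of $U_\nu$ via Tonelli/Fubini (splitting near and far contributions and invoking the near-infinity $\nu$-integrability of $\ln|z|$ exactly where needed), followed by the fundamental-solution identity for $\frac{1}{2\pi}\ln|z|$ to collapse the pairing $U_\nu(\Delta\vf)$ to $2\pi\int\vf\,{\rm d}\nu$. The paper leaves these two steps as standard facts, so your write-up simply supplies the bookkeeping they omit; no gaps.
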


\subsection{Probability theory}

\subsubsection{Notations and a.s. convergence}
In this paper we discuss the almost sure convergence of certain random variables. This subsection collects some standard notion and notation from probability theory. We assume that all the random variables have the same domain $(\Omega,\mf F,\f P)$. If $X$ is a random variable, we informally write, e.g., $X \in S$ to refer to the corresponding event, e.g., $\set{\omega\in \Omega | X(\omega) \in S}$.

\begin{definition}
Given a sequence $\serie Y$ of random variables, and an additional random variable $Y$, with values in the same Hausdorff topological space, we say that $\serie Y$ \textbf{converges almost surely} to $Y$ if
	\[
	\f P\left( \lim_{n\to\infty} Y_n = Y  \right)
	=1 \]
	and in this case we write $Y_n\xrightarrow{a.s.} Y$.
\end{definition}
\noindent If $Y$ is a random variable whose range consists of a single point $p$, we write equivalently
\[
Y_n \xrightarrow{a.s.} Y\quad
\text{ or } \quad 
Y_n \xrightarrow{a.s.} p.
\]

Eventually, we will make use of combinations of random variables with distributions described by bounded density functions. The next lemma is useful to bound the density functions of sums of random variables; the second item appeared in \cite{BSC14}.

\begin{lemma}\label{res:bound_on_density}
	Let $X_i$ be independent random variables with values on $\f C$. Call $M(X)$ the infinity norm of the density of a random variable $X$, where $M(X)=\infty$ if the distribution of $X$ is not continuous. Let $c\in \f C$ be a constant complex number. In this case,
	\begin{enumerate}
		\item $$M(cX) = M(X)/|c|^2,$$
		\item $$ M\left( \sum_{i=1}^{n} X_i \right)^{-1}\ge  \frac 1e \sum_{i=1}^{n} M^{-1}(X_i).   $$
	\end{enumerate}
\end{lemma}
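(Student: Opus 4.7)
The plan is to treat the two items separately, since they rely on essentially disjoint arguments: item 1 reduces to an elementary change-of-variables calculation, while item 2 is a reverse entropy power inequality of Bobkov--Chistyakov type, as already acknowledged by the attribution to \cite{BSC14}.

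For item 1, I would identify $\f C$ with $\f R^2$ and recall that multiplication by a nonzero complex number $c$ is an $\f R$-linear automorphism of $\f R^2$ whose Jacobian determinant equals $|c|^2$. If $X$ admits a continuous density $f$, the standard diffeomorphism formula for densities then gives that $cX$ has continuous density $y \mapsto |c|^{-2} f(c^{-1}y)$; taking the essential supremum of both sides (and using the bijectivity of $y \mapsto c^{-1}y$ on $\f C$) yields $M(cX) = M(X)/|c|^{2}$. If instead $X$ has no continuous density, then $cX$ cannot have one either, because the two laws are related by an invertible linear map; hence both sides equal $\infty$ and the equality still holds by the convention in the statement.

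For item 2, I would first reduce to the case where every $X_i$ has a continuous density. If $X_{i_0}$ lacks one, then $M(X_{i_0})^{-1}=0$, so the $i_0$-th term vanishes from the right-hand side; and writing $\sum_{i} X_i = X_{i_0} + \sum_{i\neq i_0} X_i$, the $L^\infty$-norm of a density cannot increase under convolution with another probability measure, so $M(\sum_{i} X_i)^{-1} \geq M(\sum_{i\neq i_0} X_i)^{-1}$. Iterating, it suffices to prove the inequality when all $X_i$ have continuous densities $f_i$, in which case the target inequality takes the form
\begin{equation*}
\|f_1 * \cdots * f_n\|_\infty^{-1} \geq \frac{1}{e} \sum_{i=1}^n \|f_i\|_\infty^{-1}.
\end{equation*}
In the ambient dimension $d=2$ furnished by $\f C \cong \f R^2$, this is precisely the entropy power inequality of Bobkov and Chistyakov for Rényi entropy of order infinity, established in \cite{BSC14}, so the remaining task is simply to cite that reference; the non-trivial content of its proof---a symmetric decreasing rearrangement reducing to uniform densities on Euclidean balls, followed by an asymptotic optimisation that yields the sharp constant $1/e$---need not be reproduced here.

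I expect the main subtlety not to be any step of the proof itself, but rather the bookkeeping check that the formulation in \cite{BSC14}, stated there for real-valued random vectors on $\f R^d$, transfers to complex random variables via the identification $\f C \cong \f R^2$ used consistently in item 1. This is immediate from the measure-theoretic equivalence of the two settings (and from the fact that convolution of densities on $\f C$ and on $\f R^2$ coincides under this identification), but it should be flagged explicitly in the write-up.
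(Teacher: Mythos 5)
Your proposal is correct and takes essentially the same route as the paper, which states this lemma without proof: item 2 is attributed directly to the cited Bobkov--Chistyakov result \cite{BSC14}, and item 1 is the elementary Jacobian computation for the real-linear map $z\mapsto cz$ on $\C\cong\R^2$ (with Jacobian $|c|^2$). Your extra steps---dropping coefficients without bounded densities via monotonicity of the sup-norm under convolution, and the $\C\cong\R^2$ transfer of the $d$-dimensional statement---just make explicit the details the paper leaves implicit.
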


\subsubsection{Empirical spectral distributions}

In this subsection, we review the definitions of empirical spectral distribution for a random matrix and for a random matrix polynomial.

If a deterministic matrix $A\in \C^{m \times m}$ has eigenvalues $\lambda_1(A),\dots,\lambda_m(A)$, its empirical spectral distribution (ESD) is the atomic measure
\[
\mu_A = \frac{1}{m} \sum_{i=1}^{m} \delta_{\lambda_i(A)},
\]
where each distinct eigenvalue appears as many times as its algebraic multiplicity.
Analogously, if $\sigma_1(A)\ge \dots\ge \sigma_m(A)$ are the singular values of $A$,  we define the empirical singular values distribution of $A$ as
\[
\nu_A = \frac{1}{m} \sum_{i=1}^{m} \delta_{\sigma_i(A)}.
\]
A random matrix can be seen as a random variable with values in the appropriate space of matrices, say, $\C^{nk \times nk}$ where $n$ and $k$ are fixed parameters. 
The next classic definition extends the concept of ESDs to random matrices.
\begin{definition}\label{def:ESD}
	Given a random matrix $A(\omega)$ with values in $\C^{m \times m}$, its \textbf{empirical spectral distribution (ESD)} is a random variable with values in the space of probabilities on $\f C$, defined as
	\[
	\mu_A(\omega) := \mu_{A(\omega)} = \frac{1}{m} \sum_{i=1}^{m} \delta_{\lambda_i(A(\omega))},
	\]	
	and analogously, 
	its \textbf{empirical singular values distribution} is a random variable with values in the space of probabilities on $\f R$, defined as
	\[
	\nu_A(\omega) := \mu_{A(\omega)} = \frac{1}{m} \sum_{i=1}^{m} \delta_{\sigma_i(A(\omega))}.
	\]	
\end{definition}

We will study spectral and singular values distributions for some families of random matrices $\serie A$, and find that in our cases the sequence $\{\mu_{A_n}\}_n$ always converges almost surely to a 
limit 
 probability measure $\mu\in \f P(\f C)$. In this case, we simply write
\[
\mu_{A_n}\xrightarrow{a.s.} \mu.
\] 

Finally, we recall the definition of the ESD of a random matrix polynomial \cite{BV20}. We assume that $P(z)$ has size $n \times n$ and degree $k$, and that almost surely  $P(z)$ has invertible leading coefficient. This implies that a.s.\ $P(z)$ has $kn$ finite eigenvalues, that we denote by $\lambda_1(P),\dots,\lambda_{kn}(P)$.
\begin{definition}\label{def:ESDP}
	Let $P(z)$ be a random matrix polynomial of size $n$ and degree $k$, such that its leading coefficient is invertible  almost everywhere. Its \textbf{empirical spectral distribution (ESD)}  is a random variable with values in the space of probabilities on $\C$, defined as
	\[
	\mu_P :=\frac{1}{kn} \sum_{i=1}^{kn} \delta_{\lambda_i(P)}.
	\]	
\end{definition}
It is immediate by Definitions \ref{def:ESD} and \ref{def:ESDP} that the ESD of a random matrix polynomial with invertible leading term coincides with those of its (random) companion matrix \eqref{eq:companionmatrix} and its (random) companion pencil \eqref{eq:AB_k}. We will often rely on these equivalences.

\subsubsection{Bounds on full random matrices}

In the paper, we often deal with $m\times m$  random matrices, with i.i.d.\ entries. The properties of such random matrices have been extensively studied in the literature. Here we collect a list of results that are needed below.\\

\noindent Given a sequence of measures $(\mu_n)_n$ and a function $f:\f C\to \f C$, we say that
\begin{itemize}
	\item $f$ is uniformly bounded (u.b.) if $\limsup_{n\to\infty} \int |f| {{\rm d}}\mu_n <\infty$,
	\item $f$ is uniformly integrable (u.i.) if $\lim_{t\to \infty}\limsup_{n\to\infty} \int_{|f|\ge t} |f| {{\rm d}}\mu_n =0$.
\end{itemize}

\begin{theorem}\label{res:full_random_matrices}
	Let $X$ be a complex random variable with mean $0$ and variance $1$, and for every positive integer $m$ let $A_m$ be the $m \times m$ random matrix whose entries are i.i.d. copies of $X$. The following results hold. 
	\begin{enumerate} 
		\item \cite[Theorem 2.1]{TV08}	With probability $1$, $A_m$ is invertible for all but finitely many $m$.
		\item \cite{BO11} 
		There exists $\beta>0$ for which $ x^\beta+x^{-\beta}$ is u.b.\ and $\ln(x)$
		is u.i.\ for $\{ \nu_{A_m/\sqrt m} \}_m$ almost surely.
		\item \cite{SB95}
		$\{ \nu_{A_m/\sqrt m} \}_m$ converges a.s.\ to the probability distribution $\nu$ (called the Marchenko-Pastur law) on $[0,2]$ with density
		\[
		\frac{\sqrt{4-x^2}}{\pi}.
		\] 
		\item \cite{LN09} 
		Assume that the distribution of $X$ is absolutely continuous with bounded density $f$. In this case,  there exists 	an absolute constant $c>0$  such that for every $u>0$,
		\[
		\f P [\sqrt m \sigma_m(A_m)\le u  ]\le cm^{\frac 32}
		\|f\|_{\infty} u^2 
		\]
		and thus $A_m$ is almost surely invertible.
		\item  For every $v>0$,
		\[
		\f P\left( \|A_m\|\ge v  \right)
		\le  \frac{m^2}{v^2}.
		\]
	\end{enumerate}
	
\end{theorem}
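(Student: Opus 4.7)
The plan is the logarithmic potential method. Denote by $U_k := U_{\mu_{P_{n,k}}}$ the random logarithmic potential of the ESD. Lemma \ref{res:laplacian_of_potential_is measure} yields $\Delta U_k = 2\pi \mu_{P_{n,k}}$ in $D'(\C)$. Consequently, if I can prove that $U_k \to U$ in $L^1_{loc}(\C)$ almost surely, then by Lemma \ref{res:L1loc_conv_is_dist_conv} also $U_k \xrightarrow{D'(\C)} U$ a.s.; the continuity of $\Delta$ on $D'(\C)$ combined with the hypothesis $\Delta U = 2\pi \mu$ gives $\mu_{P_{n,k}} \xrightarrow{D'(\C)} \mu$ a.s., which by Lemma \ref{res:weak_conv_is_dist_conv} is exactly the weak a.s.\ convergence required.

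To analyse $U_k$, observe that $C_k^{(k)}$ is a.s.\ invertible by part (4) of Theorem \ref{res:full_random_matrices} combined with the uniform density bound $M$. Factoring out the leading coefficient,
\begin{equation*}
U_k(z) = \frac{1}{nk} \ln|\det P_{n,k}(z)| - \frac{1}{k}\ln|\alpha_k^{(k)}| - \frac{1}{nk}\ln|\det C_k^{(k)}|.
\end{equation*}
Fix $z \in \C$. The entry $(P_{n,k}(z))_{rs} = \sum_j \alpha_j^{(k)} z^j (C_j^{(k)})_{rs}$ has variance $s(z)^2 := \sum_j |\alpha_j^{(k)}|^2 |z|^{2j}$, and by Lemma \ref{res:bound_on_density} its density is bounded by $eM/s(z)^2$. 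Hence $\widehat{B} := P_{n,k}(z)/s(z)$ is an $n\times n$ matrix with i.i.d.\ entries of mean $0$, variance $1$, and density bounded by $eM$, \emph{uniformly in $k$}. Parts (2)--(3) of Theorem \ref{res:full_random_matrices} then give
\begin{equation*}
\frac{1}{n}\ln|\det(\widehat{B}/\sqrt{n})| \xrightarrow{a.s.} c_0 := \int_0^2 \ln x \, \frac{\sqrt{4-x^2}}{\pi}\, dx.
\end{equation*}
Unwinding this identity, dividing by $k$, invoking $n = O(k^P)$ so that $(\ln n)/k \to 0$, and applying the same argument to $C_k^{(k)}$ alone, I obtain for almost every $z$
\begin{equation*}
U_k(z) \xrightarrow{a.s.} \lim_{k \to \infty} \frac{1}{2k} \ln \sum_{j=0}^k \frac{|\alpha_j^{(k)}|^2}{|\alpha_k^{(k)}|^2} |z|^{2j} = U(z).
\end{equation*}

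The main obstacle is promoting this pointwise-in-$z$ a.s.\ convergence to $L^1_{loc}$ a.s.\ convergence, as required to commute with distributional derivatives. Two subtleties arise: (i) the null set on which the limit fails a priori depends on $z$, which calls for a Fubini--Tonelli exchange producing a single full-probability event on which $U_k(z) \to U(z)$ for Lebesgue-a.e.\ $z$; and (ii) pointwise a.e.\ convergence must be strengthened to $L^1_{loc}$ convergence by controlling the negative tail $\ln^-\sigma_{\min}(P_{n,k}(z))$, which can be large whenever the matrix is near-singular. The uniform density bound $eM$ is decisive here: part (4) of Theorem \ref{res:full_random_matrices} applied to $\widehat{B}$ gives $\Prob(\sqrt{n}\,\sigma_{\min}(\widehat{B}) \le u) \le c\, n^{3/2}(eM)\, u^2$, which is strong enough both to bound $\Expect \int_{|z| \le R} |\ln^-\sigma_{\min}(P_{n,k}(z))|\, dz$ uniformly in $k$ and, at a suitably $k$-dependent threshold, to render the failure probabilities summable in $k$ via the polynomial growth $n = O(k^P)$. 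A Borel--Cantelli argument then yields a single almost-sure event carrying both the pointwise a.e.\ limit and the requisite uniform integrability of $U_k$ on compacta, whence $U_k \to U$ in $L^1_{loc}(\C)$ a.s., and the theorem follows by the distributional scheme described above.
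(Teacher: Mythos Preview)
Your proposal does not address the stated theorem at all. Theorem~\ref{res:full_random_matrices} is a list of background facts about square i.i.d.\ random matrices (invertibility, uniform integrability of $\ln$ for the singular-value ESD, Marchenko--Pastur, a smallest-singular-value tail, and a spectral-norm tail). In the paper it carries no proof: items 1--4 are simply attributed to the cited references, and item 5 is the elementary Markov bound
\[
\f P(\|A_m\|\ge v)\le \f P(\|A_m\|_F\ge v)\le \frac{\Expect\|A_m\|_F^2}{v^2}=\frac{m^2}{v^2}.
\]
What you have written is, instead, a sketch of a proof of Theorem~\ref{thm:esdmonicninfka} (the $k\to\infty$ ESD result for $P_{n,k}$): you analyse $U_{\mu_{P_{n,k}}}$, invoke the companion-matrix factorisation, and appeal to parts (2)--(4) of Theorem~\ref{res:full_random_matrices} as tools. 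A proof cannot use, as an ingredient, the very statement it purports to establish; and in any case none of your argument touches the five items listed.

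If your intent was in fact Theorem~\ref{thm:esdmonicninfka}, note a further issue with the sketch: you invoke items (2)--(3) of Theorem~\ref{res:full_random_matrices} to get $\tfrac{1}{n}\ln|\det(\widehat B/\sqrt n)|\to c_0$ almost surely. Those items are stated for a \emph{fixed} entry distribution as $m\to\infty$, whereas here the law of each entry of $\widehat B$ changes with $k$ (it is a $k$-term weighted sum depending on $z$ and on all $\alpha_j^{(k)}$), and $n=n(k)$ need not even diverge. The paper avoids this by not using the Marchenko--Pastur limit at all in the $k\to\infty$ regime; it relies solely on items (4) and (5), choosing $u=(kn^2)^{-1}$ and $v=kn^2$ to obtain the deterministic bound $\bigl|\tfrac1k\int\ln x\,\mathrm d\nu_{G_k(z)/\sigma}\bigr|=O(\ln k/k)$ on a set of probability $1-O(k^{-2})$, and then invokes Borel--Cantelli. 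The passage from pointwise a.e.\ convergence of $U_k$ to weak convergence of $\mu_{P_{n,k}}$ is handled via Lemma~\ref{res:conv_log}, whose extra hypothesis $\limsup_k\int_{|z|\ge 1}\ln|z|\,\mathrm d\mu_{A_k^{-1}B_k}<\infty$ is verified separately in Lemma~\ref{res:log_uik}; your sketch omits this tightness step.
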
 
%

%
Item $2.$ in Theorem \ref{res:full_random_matrices} is especially useful when we need to find some bounds on the logarithmic potentials associated with measures coming from matrices. In the next lemma, this information is required to prove that, for nearby measures, the logarithmic integrals coincide. 
The result is inspired by the ones in \cite{BO11}.

\begin{lemma}\label{res:replacement_false}
	Let $M_n$ and $N_n$ be random complex matrices of size $n\times n$. Suppose that almost surely,
	\begin{enumerate}
		\item $\nu_{M_n} - \nu_{N_n}\to 0$ weakly,
		\item $x\mapsto \ln(x)$ is u.i.\ for $\{\nu_{M_n}\}_n$ and $\{\nu_{N_n}\}_n$.
	\end{enumerate}
	Then, almost surely, $\int \ln(x) {{\rm d}}\nu_{M_n} -
	\int \ln(x) {{\rm d}}\nu_{N_n}\xrightarrow{n\to\infty} 0$.
\end{lemma}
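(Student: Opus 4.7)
The plan is to reduce everything to the pathwise deterministic setting and then use a truncation argument to bridge the gap between weak convergence (which only sees bounded continuous test functions) and the unbounded function $\ln$. First I would intersect the three full-probability events (weak convergence of $\nu_{M_n}-\nu_{N_n}$ to $0$, and uniform integrability of $\ln$ for each of $\{\nu_{M_n}\}_n$ and $\{\nu_{N_n}\}_n$), obtaining a set of realisations of measure one on which all three properties hold simultaneously. Fixing such an $\omega$, it then suffices to show the deterministic statement: if $\{\mu_n\}_n$ and $\{\rho_n\}_n$ are probability measures on $[0,\infty)$ with $\mu_n-\rho_n\to 0$ weakly and with $\ln$ uniformly integrable for both families, then $\int \ln x \, \mathrm{d}\mu_n - \int \ln x\, \mathrm{d}\rho_n \to 0$.

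Next I would introduce a truncation. For $t>0$, set
\[
f_t(x) := \max\bigl(\min(\ln x, t),-t\bigr) \qquad \text{for } x>0,\qquad f_t(0):=-t.
\]
This makes $f_t\in C_b([0,\infty))$ with $\|f_t\|_\infty\le t$, so weak convergence gives $\int f_t\, \mathrm{d}(\mu_n-\rho_n)\to 0$ as $n\to\infty$ for each fixed $t$. Decompose
\[
\int \ln x\,\mathrm{d}\mu_n - \int \ln x\,\mathrm{d}\rho_n = \int f_t\,\mathrm{d}(\mu_n-\rho_n) + R_{\mu_n}(t)-R_{\rho_n}(t),
\]
where $R_{\mu_n}(t):=\int (\ln x - f_t(x))\,\mathrm{d}\mu_n$, and similarly for $R_{\rho_n}(t)$. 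The pointwise bound $|\ln x - f_t(x)|\le |\ln x|\mathbf 1_{|\ln x|\ge t}$ yields
\[
|R_{\mu_n}(t)|\le \int_{|\ln x|\ge t}|\ln x|\,\mathrm{d}\mu_n
\]
and the analogous bound for $\rho_n$.

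Fixing $\varepsilon>0$, uniform integrability of $\ln$ for both families furnishes a threshold $t_0$ such that, for every $t\ge t_0$, $\limsup_n |R_{\mu_n}(t)|<\varepsilon$ and $\limsup_n |R_{\rho_n}(t)|<\varepsilon$. Picking any such $t$ and letting $n\to\infty$, the term $\int f_t\,\mathrm{d}(\mu_n-\rho_n)$ vanishes by weak convergence, while the two remainders are each eventually less than $\varepsilon$. Hence
\[
\limsup_{n\to\infty}\Bigl| \int \ln x\,\mathrm{d}\mu_n - \int \ln x\,\mathrm{d}\rho_n \Bigr| \le 2\varepsilon,
\]
and sending $\varepsilon\to 0$ completes the pathwise argument, which in turn gives the almost sure conclusion.

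The only real subtlety—and thus the step I would watch most carefully—is the extension of $f_t$ to $x=0$: singular values are nonnegative and may be exactly zero, so I need $f_t$ to be continuous at $0$ (setting $f_t(0)=-t$) in order to apply weak convergence as a statement about test functions in $C_b([0,\infty))$. Once this is handled, the truncation-plus-tail argument is standard, and uniform integrability precisely provides the control of the tail that pure weak convergence lacks.
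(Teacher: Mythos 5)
Your proof is correct and follows essentially the same route as the paper: truncate $\ln$ at level $t$, control the two tail terms by uniform integrability, and pass to the limit in the truncated part by weak convergence of $\nu_{M_n}-\nu_{N_n}$. The only (minor) difference is that the paper multiplies $\ln$ by a compactly supported cutoff $\varphi$, so it only tests the signed measure against compactly supported continuous functions, whereas your clipped function $f_t$ is bounded but not compactly supported, so your step $\int f_t\,\mathrm{d}(\mu_n-\rho_n)\to 0$ relies on the standard $C_b$ reading of ``weak convergence'' --- which is the intended interpretation of the hypothesis and is what actually holds in the paper's application, where both singular value distributions converge a.s.\ to the same Marchenko--Pastur law.
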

\begin{proof}
	If we fix $\ve >0$, from the second assumption, we can find $t> 0$ large enough so that
	\begin{equation}\label{eq:ln_ui}
	\int_{|\ln(x)|\ge t} |\ln(x)| {{\rm d}}\nu_{M_n } +
	\int_{|\ln(x)|\ge t} |\ln(x)| {{\rm d}}\nu_{N_n } \le \ve
	\end{equation}
	holds almost surely and for large enough $n$. If $\vf:\f R\to\f R$ is a continuous function with support on $[e^{-2t},e^{2t}]$, such that $0\le \vf(x)\le 1$ and $\vf|_{[e^{-t},e^t]} \equiv 1$, then also $\vf(x)\ln(x)$ is continuous and with the same compact support. Moreover, using \eqref{eq:ln_ui}, 
	\begin{align}\label{eq:diff_ln_comp_supp}
	\nonumber
	\left|
	\int_{|\ln(x)|\le t} \ln(x) {{\rm d}}\nu_{M_n } - 
	\int \vf(x)\ln(x) {{\rm d}}\nu_{M_n } 
	\right| &=
	\left|
	\int_{t < |\ln(x)|\le 2t} \vf(x)\ln(x) {{\rm d}}\nu_{M_n } 
	\right|\\
	\nonumber
	&\le \int_{t < |\ln(x)|\le 2t} |\ln(x)| {{\rm d}}\nu_{M_n } \\
	&\le \int_{ |\ln(x)|\ge t} |\ln(x)| {{\rm d}}\nu_{M_n } \le \ve,
	\end{align}
	and the same holds for $\nu_{N_n }$.
	Since $\vf(x)\ln(x)$ is continuous and with compact support, the first assumption tells us that, a.s.\ and for large enough $n$,
	\begin{equation}\label{eq:chissenefregacomelachiamo}
	\left|
	\int \ln(x)\vf(x) {{\rm d}}
	(\nu_{M_n } - \nu_{N_n })
	\right|\le \ve.
	\end{equation}
	Using \eqref{eq:ln_ui}, \eqref{eq:diff_ln_comp_supp} and \eqref{eq:chissenefregacomelachiamo}, we find that
	\begin{align*}
	\left|
	\int \ln(x) {{\rm d}}\nu_{M_n } - 
	\int \ln(x) {{\rm d}}\nu_{N_n } 
	\right|
	&\le 
	\left|
	\int_{|\ln(x)|\le t} \ln(x) {{\rm d}}
	(\nu_{M_n } - \nu_{N_n })
	\right|\\
	& +
	\int_{|\ln(x)|\ge t} |\ln(x)| {{\rm d}}(\nu_{M_n } + \nu_{N_n })\\
	&\le 
	\left|
	\int \ln(x)\mathbf{1}_{|\ln(x)|\le t} - \vf(x)\ln(x) {{\rm d}}\nu_{M_n } 
	\right| \\&+
	\left|
	\int \ln(x)\mathbf{1}_{|\ln(x)|\le t} - \vf(x)\ln(x) {{\rm d}}\nu_{N_n } 
	\right| 
	\\&+ 
	\left|
	\int \ln(x)\vf(x) {{\rm d}}
	(\nu_{M_n } - \nu_{N_n })
	\right|\\
	& +
	\int_{|\ln(x)|\ge t} |\ln(x)| {{\rm d}}(\nu_{M_n } + \nu_{N_n }) \le 4\ve 
	\end{align*}
	for every $\ve$, and in particular, 
	\[
	\int \ln(x) {{\rm d}}\nu_{M_n } - 
	\int \ln(x) {{\rm d}}\nu_{N_n } \to 0
	\]
	almost surely. 
\end{proof}

\section{Proof of Theorem \ref{thm:esdmonicninf}:  Empirical spectral distribution for $n \times n$ complex random matrix polynomials of degree $k$, in the limit $n \rightarrow \infty$}\label{sec:n}

Consider a random vector $[X_0, X_1, \dots, X_k]$ where all $X_j$ are independent random variables with zero mean and unit variance, not necessarily with the same distribution.
Let $\alpha_0,\dots,\alpha_k$ be complex constants with $\alpha_k\ne 0$. 
For all $j$, let $C_j$ be an $n\times n$ random matrix whose entries are independent copies of $X_j$, and build the  random matrix polynomial
\begin{equation}\label{eq:Px}
P_n(z) =\sum_{j=0}^{k} C_j \alpha_j z^j.
\end{equation} 
 Note that we may equivalently have stated that the entries of $C_j$ have variance $|\alpha_j|^2$ to recover the original form \eqref{eq:matpoly}. 
Moreover, observe that the spectrum of $P_{n}(z)$ is invariant with respect to a global scaling $P_n(z) \mapsto \alpha P_n(z)$ ($\alpha \neq 0)$. Hence, from now we take $\alpha_k=1$ without losing generality.

Thanks to item $1.$ in Theorem \ref{res:full_random_matrices}, with probability $1$ and for sufficiently large $n$, $C_k$ is invertible, so
the eigenvalues of $P_n(z)$ coincide with the eigenvalues of its companion matrix $M_n$ \eqref{eq:companionmatrix} and of its companion pencil $A_nz-B_n$ \eqref{eq:AB_k}. (The suffix $n$ in this section emphasizes that we let $n \rightarrow \infty$ while keeping $k$ constant.) 


We are now ready to prove Theorem \ref{thm:esdmonicninf}. Our argument follows the ideas of Girko \cite{GI90} and more recent authors such as Bordenave, Caputo and Chafai \cite{BCC08} or Tao, Vu and Krishnapur \cite{TVK10}. It relies on a few technical lemmata whose proofs are postponed to the appendices.\\

\begin{proof}[Proof of Theorem \ref{thm:esdmonicninf}]
	As discussed above, we can suppose $\alpha_k=1$ because multiplying  the matrix polynomials by a constant does not change their eigenvalues, and the density function in the theorem's statement is not affected. Moreover, since $p$ is the smallest index such that $\alpha_p\ne 0$, we can rewrite the polynomial as
	$$
	P_n(z) =\sum_{j=p}^{k} C_j \alpha_j z^j = z^p \left[
	\sum_{j=0}^{k-p} C_{j+p} \alpha_{j+p} z^j
	\right]
	$$
	and as a consequence, $P_n(z)$ has $np$ zero eigenvalues, and the rest coincide with the eigenvalues of  $Q_n(z):= P_n(z)/z^p$. If the sequence of $Q_n(z)$ admits a constant limit ESD $\mu$, then it is easy to verify that $P_n(z)$ admits
	$$
	\frac pk \delta_0 + \frac{k-p}{k} \mu
	$$
	as limit ESD. Since $Q_n(z)$ still satisfies the hypotheses of the theorem, it is enough to prove the thesis for matrix polynomials with $\alpha_0\ne 0$ and $p=0$. 
 From now on, we thus suppose  $\alpha_k=1$ and $\alpha_0\ne 0$, and we call $A_nz -B_n$ the associated pencil as expressed in \eqref{eq:AB_k}. 
 
 Note that $f(z)$ is a density function on the complex plane, that is, a positive function with unit mass. Indeed, we can compute the Laplacian operator explicitly as
 \begin{equation}\label{eq:limit_density_n}
 	f(z) = \frac 1{4\pi k}\left[	\frac{\sum_{j=1}^k (2j)^2 |\alpha_j|^2 |z|^{2j-2}}{\sum_{j=0}^k |\alpha_j|^2 |z|^{2j}}
 	-
 	\left(
 	\frac{\sum_{j=1}^k 2j |\alpha_j|^2 |z|^{2j-1}}{\sum_{j=0}^k |\alpha_j|^2 |z|^{2j}}
 	\right)^2
 	\right],
 	\end{equation}
and, by the Cauchy-Schwartz inequality, 
		\[ 
		\left(\sum_{j=1}^k (2j)^2 |\alpha_j|^2 |z|^{2j-2}\right)
		\left(\sum_{j=0}^k |\alpha_j|^2 |z|^{2j}\right)
	\ge
	\left(
\sum_{j=1}^k 2j |\alpha_j|^2 |z|^{2j-1}
	\right)^2
	\]
so $f(z)$ is always nonnegative. Via a polar change of coordinates, we can integrate the explicit expression to confirm it has indeed unit mass.
\begin{align*}
\int_{\f C} f(z) {{\rm d}}z
&=\frac 1{4\pi k}
\int_0^{2\pi} \int_0^\infty
	\frac{\sum_{j=1}^k (2j)^2 |\alpha_j|^2 r^{2j-1}}{\sum_{j=0}^k |\alpha_j|^2 r^{2j}}
-r
\left(
\frac{\sum_{j=1}^k 2j |\alpha_j|^2 r^{2j-1}}{\sum_{j=0}^k |\alpha_j|^2 r^{2j}}
\right)^2
 {{\rm d}}r{{\rm d}}\theta \\
&=\frac 1{2 k}
 \int_0^\infty
\frac{\sum_{j=1}^k (2j)^2 |\alpha_j|^2 r^{2j-1}}{\sum_{j=0}^k |\alpha_j|^2 r^{2j}}
-r
\left(
\frac{\sum_{j=1}^k 2j |\alpha_j|^2 r^{2j-1}}{\sum_{j=0}^k |\alpha_j|^2 r^{2j}}
\right)^2
{{\rm d}}r
\\
&=\frac 1{2 k}
\left[
\frac{\sum_{j=1}^k 2j |\alpha_j|^2 r^{2j}}{\sum_{j=0}^k |\alpha_j|^2 r^{2j}}
\right]_0^\infty  = 1.
\end{align*}

Thanks to item $1.$ in Theorem \ref{res:full_random_matrices}, we can work in the probability $1$ space of events where $C_k$, and thus $A_n$, is invertible for any large enough $n$. Observe that $\ln|\cdot|$ is integrable with respect to the measure $\mu_{P_n}$ in a neighbourhood of $\infty$. 
	Let $U_{P_n}(z) := \frac 1n \ln |\det({P_n}(z))|$; clearly $U_{P_n}(z)$ is a function defined for all $z$ but the eigenvalues of $P_n(z)$. 
	Since the leading coefficient of $\det(P_n(z))$, as a polynomial in $z$, is $\det (C_k)$, we have
	\begin{align}\label{eq:UP1}
	\nonumber U_{P_n}(z)
	&= \frac 1n \ln \left| \det(C_k) \prod_{i=1}^{nk}  
	(z - \lambda_i({P_n}))
	\right|\\
	\nonumber&= \frac 1n 
	\ln |\det(C_k)| +
	\frac 1n \sum_{i=1}^{nk}  
	\ln \left|  z - \lambda_i({P_n}) \right|\\
	&= 
	\int_{\f R^+}
	\ln(x) \, {{\rm d}} \nu_{C_k}(x)
	+ k
	\int_{\f C}
	\ln |z -z'|\,  {{\rm d}} \mu_{P_n}(z').
	\end{align}
	On the other hand, evaluating the matrix polynomial at any $z \in \C$, we observe that $P_n(z)$ is a $n\times n$ complex random matrix. As a consequence, the absolute value of its determinant can be expressed as the product of its singular values:
	\begin{align}\label{eq:UP2}
	U_{P_n}(z)
	&= \frac 1n \ln \left|  \prod_{i=1}^{n}  \sigma_i({P_n}(z))
	\right|
	= 
	\int_{\f R^+}
	\ln(x) \, {{\rm d}} \nu_{{P_n}(z)}(x).
	\end{align}
	If we fix $z$ and inspect $P_n(z)$, we find that the entries are i.i.d.\ copies of a random variable with mean $0$ and variance $\sigma^2 = \sum_{i=0}^{k} |z|^{2i}|\alpha_i|^2$, and the distribution does not depend on $n$. As a consequence, for all $n$, $P_n(z)/(\sigma\sqrt n)$ has i.i.d.\ entries with mean 0 and variance $n^{-1}$. 
	From item $2.$ in Theorem \ref{res:full_random_matrices} we know that $x\mapsto \ln(x)$ is u.i. for both the sequences $\left\{ \nu_{\frac {P_n(z)}{\sigma \sqrt n}} \right\}_n$ and $\left\{
	\nu_{\frac{C_k}{\sqrt n}}
	\right\}_n$. Moreover, almost surely both sequences converge weakly to the Marchenko-Pastur law $\nu$ as in Theorem \ref{res:full_random_matrices}-3.
	All the assumptions of Lemma \ref{res:replacement_false} are thus satisfied with
	$M_n = \frac {P_n(z)}{\sigma \sqrt n}$, $N_n = \frac{C_k}{\sqrt n}$, so for almost every $z$ and a.s.\ 
	\[
	\int_{\f R^+}
	\ln(x) \, {{\rm d}} \nu_{\frac {P_n(z)}{\sigma \sqrt n}}(x) - 
	\int_{\f R^+}
	\ln(x) \, {{\rm d}} \nu_{\frac{C_k}{\sqrt n}}(x)
	\xrightarrow{n\to\infty} 0.
	\]
	As a consequence, from \eqref{eq:UP1} and \eqref{eq:UP2}, for almost every $z$ and a.s.\ 
	\begin{align}\label{eq:conv_U}
	\nonumber k
	\int_{\f C}
	\ln |z -z'|\,  {{\rm d}} \mu_{P_n}(z')	
	&=
	\int_{\f R^+}
	\ln(x) \, {{\rm d}} \nu_{{P_n}(z)}(x)
	-
	\int_{\f R^+}
	\ln(x) \, {{\rm d}} \nu_{C_k}(x)\\
	\nonumber&=\ln(\sigma) +
	\int_{\f R^+}
	\ln(x) \, {{\rm d}} \nu_{\frac {{P_n}(z)}{\sigma \sqrt n}}(x) - 
	\int_{\f R^+}
	\ln(x) \, {{\rm d}} \nu_{\frac{C_k}{\sqrt n}}(x)\\
	\implies 	\lim_{n\to\infty} 
	k
	\int_{\f C}
	\ln |z -z'|\,  {{\rm d}} \mu_{P_n}(z')
	&	=
	\frac 12 \ln(\sigma^2)
	= 
	\frac 12 \ln\left(  \sum_{i=0}^{k} |\alpha_i|^2|z|^{2i} \right)
	\end{align}
	The function $\ln(\sigma^2)$ is  continuous  in $z$ since $\alpha_0\ne 0$, so it belongs to $L^1_{loc}(\f C)$ and to the space of Schwartz distributions. 
	If we now call $U(x) = (2k)^{-1} \ln(\sigma^2)$, then by assumption $\Delta_z U(z) = 2\pi \mu$, where $\mu$ has density $f(z)$.
	In Lemma \ref{res:log_uia} we prove that a.s.
	\[
	\limsup_{n\to\infty} \int_{|z|\ge 1} \ln |z| \, {{\rm d}} \mu_{A_n^{-1}B_n}<\infty,
	\]
	where $A_n$, $B_n$ is the pencil associated to our polynomial $P_n(z)$.
	Note that $\mu_{P_n} = \mu_{A_n^{-1}B_n}$, and that 
	from \eqref{eq:conv_U}, $U_{\mu_{P_n}}(z)\to U(z)$ for almost every $z$; we can then apply Lemma \ref{res:conv_log} and conclude that 
	$\mu_{P_n}$ converges almost surely to $\mu$.
\end{proof}

$ $

We stress that Theorem \ref{thm:esdmonicninf} is universal in the sense that it does not depend on the distributions of the random variables $X_j$, as long that they are independent, with zero mean and unit variance. The only parameters actually influencing the limit distributions are the constants $\alpha_j$ and, possibly, the degree $k$. 
Below, by specializing the result to different choices of $\alpha_j$, we obtain the limit ESD for some particular sequences of random matrix polynomials.

\begin{itemize}
	\item \textbf{Kac Polynomials}. If $|\alpha_j|=c^j$ for all $j$ and some $c>0$, then 
	we obtain the limit measures with densities
	\[ 
f(z) = \frac{c^2}{ \pi k} \left(   \frac{1}{(|cz|^2-1)^2}  -  \frac{(k+1)^2|cz|^{2k}}{(|cz|^{2k+2}-1)^2}    \right).
\] 
With a suitable change of  variable, one can derive from this example also the limit ESDs when all $\alpha_j$ are zero, except for $|\alpha_0| = 1$ and  $|\alpha_k|=c^k$. 
When $k=c=1$, then the above formula yields the uniform measure on the Riemann sphere, coinciding with the distribution previously found \cite{BA12,EKS94,houghzeros} for Gaussian pencils (and valid, in that case, also for any finite $n$).
\item \textbf{Binomial or Elliptic Polynomials}. If $|\alpha_j|^2 = {k\choose j}c^{2j}$ for all $j$ and some $c>0$, then the limit ESD has density
$$
f(z) = 
\frac 1{4k\pi}\Delta_z\ln\left(  \sum_{i=0}^{k} {k\choose i} |cz|^{2i} \right) = 
\frac 1{4\pi}\Delta_z\ln\left(  |cz|^2+1 \right) 
= 
\frac{c^2}{\pi(|cz|^2+1)^2}.
$$
	Observe that, in the case of elliptic polynomials, the distribution is independent of $k$ and always coincides with the uniform measure on the Riemann sphere after a stereographic projection with respect to the circle of radius $1/c$.
\item \textbf{Flat or Weyl Polynomials}. If $|\alpha_j|^2 = k^j(j!)^{-1}c^{2j}$, then the limit ESD has density
\[
f(z)=c^2
\frac{
	1 -  (-k|cz|^2 + k+1)k|cz|^{2k} \Gamma^{-1} 
	- k|cz|^{4k+2}\Gamma^{-2}
}{\pi},
\]
where \[
\Gamma =  e^{k|cz|^2}\int_{|cz|^2}^\infty (e^{-s}s)^k\,{{\rm d}}s.
\]
\end{itemize}

\begin{remark}\label{rmk:convergence_of_f_k_for_diverging_k}
	In the cases of Kac and Weyl polynomials, the limit ESDs depend on $k$. 
	If $\mu_k$ is the limit ESD for Kac polynomials with degree $k$, with density
	\[
	\frac{c^2}{ \pi k} \left(   \frac{1}{(|cz|^2-1)^2}  -  \frac{(k+1)^2|cz|^{2k}}{(|cz|^{2k+2}-1)^2}    \right),
	\] 
	then  it is easy to check that, when $k \to \infty$, $\mu_k$ converges weakly to the uniform probability measure on the  circle of radius $1/c$ and centred in zero. 
	
	On the other hand, when
	$\mu_k$ is the limit ESD for Weyl polynomials with degree $k$, with density
	\[
	c^2
	\frac{
		1 -  (-k|cz|^2 + k+1)k|cz|^{2k} \Gamma^{-1} 
		- k|cz|^{4k+2}\Gamma^{-2}
	}{\pi},
	\] 
  it is possible to verify that $\mu_k$ converges weakly to the uniform probability measure on the  disk of radius $1/c$ and centred in zero.
\end{remark}
The observations in Remark \ref{rmk:convergence_of_f_k_for_diverging_k}
suggest that for increasing sequences of degrees and size of matrices, we can also derive a limit ESD. 
In the next section, we analyse the limit ESDs for random matrix and scalar polynomials with increasing degree, and show that for Kac and Weyl polynomials, the computed limits coincide with the ones found in the remark above.

\section{Proof of Theorem \ref{thm:esdmonicninfka}:  Empirical spectral distribution for complex random matrix polynomials of size $n = O(k^P)$, in the limit $k \rightarrow \infty$}\label{sec:k}
Suppose that for any $k$ we have a vector of independent random variables $X_0^{(k)}, X_1^{(k)}, \dots, X_k^{(k)}$ with zero mean, unit variance, 
and continuous distributions with densities bounded by a constant $M>0$ not depending on $k$.
Let also $\alpha_0^{(k)}, \alpha_1^{(k)}, \dots, \alpha_k^{(k)}$
be sequences of complex numbers, where $\alpha_k^{(k)}\ne 0$ for any $k$. We consider the $n \times n$ matrix polynomial of degree $k$
\begin{equation}\label{eq:Pxk}
P_{n,k}(x) =\sum_{j=0}^{k}\alpha_j^{(k)} C_j^{(k)} x^j,
\end{equation}
where, for $j=0,\dots,k$ every coefficient $C_j^{(k)}$ is an $n \times n$ random matrix whose entries are i.i.d. copies of $X_j^{(k)}$. 
Note that one can just assume the entries of $C_j^{(k)}$ to have variance $|\alpha_j^{(k)}|^2$ to find again the original form \eqref{eq:matpoly}.
Since we are interested in the spectrum of $P_{n,k}$, we can always divide by $\alpha_k^{(k)}$, so from now on, we suppose $\alpha_k^{(k)}=1$.
Again, by Theorem \ref{res:full_random_matrices}-4, with probability $1$ the matrix $C_k^{(k)}$ is invertible, and hence
the eigenvalues of $P_{n,k}(x)$ coincide with the eigenvalues of its companion matrix $M_k$ \eqref{eq:companionmatrix} and of its companion pencil $A_kz-B_k$ \eqref{eq:AB_k}. (The suffix $k$ in this section emphasizes that we let $k \rightarrow \infty$ while controlling $n$ by $k^P$ for some constant $P>0$.)

We now prove Theorem \ref{thm:esdmonicninfka}, still following the lead of \cite{BCC08,GI90,TVK10} and thus providing an almost sure limit for the empirical spectral distributions $\mu_{P_{n,k}}$, as in Definition \ref{def:ESDP}, when $k\to\infty$,
and when there exists a constant $P>0$ such that $n = n(k) = O(k^P)$. 
Again, the most technical steps within the proof are dealt with in the appendices, so to improve the readability of the main text.\\

\begin{proof}[Proof of Theorem \ref{thm:esdmonicninfka}]
	For the sake of a lighter notation, let us call $G_k(z) := P_{n(k),k}(z)$, and let us refer to $n(k)$ simply as $n$ when it is not fundamental to stress its dependence on $k$. 
	By Theorem \ref{res:full_random_matrices}-4, we can work in the probability $1$ space of events where $C_k:= C_k^{(k)}$, and thus $A_k$, is invertible. Note that, for a single instance of the polynomial,  $\mu_{G_k}$ belongs to the space of probability measures that integrate $\ln|\cdot|$ in a neighbourhood of $+\infty$. Moreover, we can always suppose $|\alpha_k^{(k)}|=1$ for every $k$, and the assumption of almost everywhere existence for $U(z)$ implies that 
	$$\ln \left(
	\sum_{i = 0, \dots, k-1} \frac{		|\alpha_i^{(k)}|^2}{|\alpha_k^{(k)}|^2}
	\right) =  O(k).$$
	
	Let $U_{k}(z) := \frac 1{nk} \ln |\det(G_k(z))|$ be a function defined for all $z$ except on the eigenvalues of $G_k(z)$. 
	Since the leading term of the polynomial $\det(G_k(z))$ is $\det (C_k)$, we have
	\begin{align}\label{eq:UP1k}
	\nonumber U_{k}(z)
	&= \frac 1{nk} \ln \left| \det(C_k) \prod_{i=1}^{nk}  
	(z - \lambda_i({G_k}))
	\right|\\
	\nonumber&= \frac 1{nk}
	\ln |\det(C_k)| +
	\frac 1{nk} \sum_{i=1}^{nk}  
	\ln \left|  z - \lambda_i({G_k}) \right|\\
	&= 
	\frac 1k
	\int_{\f R^+}
	\ln(x) \, {{\rm d}} \nu_{C_k}(x)
	+ 
	\int_{\f C}
	\ln |z -z'|\,  {{\rm d}} \mu_{G_k}(z').
	\end{align}
	On the other hand, if we fix $z$, then ${G_k}(z)$ is an $n\times n$ complex random matrix. As a consequence, its determinant can be expressed through the product of its singular values:
	\begin{align}\label{eq:UP2k}
	U_{k}(z)
	&= \frac 1{nk} \ln \left|  \prod_{i=1}^{n}  \sigma_i({G_k}(z))
	\right|
	= 
	\frac 1k
	\int_{\f R^+}
	\ln(x) \, {{\rm d}} \nu_{{G_k}(z)}(x).
	\end{align}
	If we fix $z$ and analyse $G_k(z)$, we find that the entries are i.i.d.\ copies of a random variable with mean $0$, variance $\sigma^2 = \sum_{i=0}^k |\alpha_i^{(k)}|^2|z|^{2i}$, and with a continuous density function. As a consequence, $G_k(z)/\sigma$ has i.i.d.\ entries with mean 0 and  variance $1$ for every $k$. Each entry of $G_k(z)/\sigma$ is of the type
	\[
	Y_k = X_k^{(k)} \frac{\alpha_k^{(k)}z^k}{\sigma} +  X_{k-1}^{(k)} \frac{\alpha_{k-1}^{(k)}z^{k-1}}{\sigma} + \dots +  X_{1}^{(k)} \frac{\alpha_1^{(k)}z}{\sigma} + X_0^{(k)} \frac {\alpha_0^{(k)}}\sigma
	\]
	where $X_i^{(k)}$ all have density function uniformly bounded by a constant $M>0$. Thanks to Lemma \ref{res:bound_on_density}, we can thus compute a bound on the density function of every entry $Y_k$  in $G_k(z)/\sigma$.
		$$
		M(Y_k)^{-1} \ge \frac 1e \sum_{i=0}^k M\left(X_i^{(k)} \frac{\alpha_i^{(k)}z^i}{\sigma} \right)^{-1} 
		\ge 
		\frac 1e
		\sum_{i=0}^k \frac{|\alpha_i^{(k)}|^2|z|^{2i}}{\sigma^2}  M^{-1} = 
		\frac 1{Me} 
		$$
	As a consequence
	the density of each entry of $G_k(z)/\sigma$ is bounded by a same absolute constant, that in particular does not depend on $k$, $n$ or $z$. We can now use Theorem \ref{res:full_random_matrices}-4,5 and find that for every $u,v>0$, 
	\[
	\f P \left(\sigma_n(G_k(z)/\sigma)\le u  \right)\le cn^{\frac 52}
 u^2, \qquad 
	\f P\left( \|G_k(z)/\sigma\|\ge v  \right)
	\le  \frac{n^2}{v^2},
	\]
	where $c\in \f R^+$ is an absolute constant.
	Taking  $u = (kn^2)^{-1}$ and $v=kn^2$,
	we find that up to a space of probability $O(1/k^2)$,
	\begin{equation}\label{eq:order_of_convergence_sv_of_P}
	\left| \frac 1{k}
	\int_{\f R^+}
	\ln(x) \, {{\rm d}} \nu_{{G_k}(z)/\sigma}(x) \right|
	\le \frac 1{k} \ln(k n(k)^2)
	= O\left(
	\frac{\ln(k)}{k}
	\right)
	\xrightarrow{k\to\infty} 0,
	\end{equation}	 
	where we used $n(k)= O(k^P)$. 
	Since the sequence $1/k^2$ is summable, 
	Borel-Cantelli Lemma lets us conclude that \eqref{eq:order_of_convergence_sv_of_P} holds for any $z$ and, almost surely, for all $k$ large enough.
	Observe that the very same argument can be repeated after replacing $G_k(z)/\sigma$ by $C_k$. Hence, almost surely,
	\begin{align*}
	&\left| \frac 1{k}
	\int_{\f R^+}
	\ln(x) \, {{\rm d}} \nu_{{G_k}(z)/\sigma}(x) -
	\frac 1{k}
	\int_{\f R^+}
	\ln(x) \, {{\rm d}} \nu_{C_k}(x) \right|& \\
	\le & 
	\left| \frac 1{k}
	\int_{\f R^+}
	\ln(x) \, {{\rm d}} \nu_{{G_k}(z)/\sigma}(x) \right| +
	\left|
	\frac 1{k}
	\int_{\f R^+}
	\ln(x) \, {{\rm d}} \nu_{C_k}(x) \right|
	\xrightarrow{k\to\infty}0.
	\end{align*}
	From \eqref{eq:UP1k} and \eqref{eq:UP2k}
	we thus have that, almost surely,
	\[
	\Bigg| 
	\int_{\f C}
	\ln |z -z'|\,  {{\rm d}} \mu_{G_k}(z')-
	\frac{\ln(\sigma)}{k}
	\Bigg|
	=
	\Bigg| \frac 1{k}
	\int_{\f R^+}
	\ln(x) \, {{\rm d}} \nu_{{G_k}(z)/\sigma}(x) -
	\frac 1{k}
	\int_{\f R^+}
	\ln(x) \, {{\rm d}} \nu_{C_k}(x) \Bigg| \xrightarrow{k\to\infty}0.
	\]
	Note now that, by hypothesis,
	\begin{align*}
	\frac 1{2k} \ln(\sigma^2) =
	\frac 1{2k} \ln\left(\sum_{i=0}^{k} |\alpha_i^{(k)}|^2|z|^{2i} \right)
	\to U_{\mu}(z).
	\end{align*}
	 As a consequence,
	for  almost any $z$ and a.s.\
	\begin{align*}\label{eq:UP3}
	U_{\mu_{G_k}}(z) = \int_{\f C}
	\ln |z -z'|\,  {{\rm d}} \mu_{G_k}(z')
	\to U_{\mu}(z).
	\end{align*}
	Observe that $\mu_{G_k} = \mu_{A_k^{-1}B_k}$, 
	where $A_k$, $B_k$ is the pencil associated to our polynomial $G_k(z)$.
	In Lemma \ref{res:log_uik} we proved that a.s.\ 
	\[
	\limsup_{k\to\infty} \int_{|z|\ge 1} \ln |z| \, {{\rm d}} \mu_{A_k^{-1}B_k}<\infty.
	\]	
	Since $U_{\mu_{G_{k}} }(z)\to  U_{\mu}(z)$ for almost every $z$, we can apply Lemma \ref{res:conv_log} and conclude that 
	$\mu_{P_{n(k),k}} $ converges almost surely to $\mu$.

\end{proof}

$ $

As in the previous section, the result is universal in the sense that it does not depend on the distributions of $X_j^{(k)}$, as long that they are independent, have zero mean, unit variance and continuous and uniformly bounded density functions. 
By specializing the result to different choices of the weights $\alpha_j^{(k)}$, we can obtain the limit ESD for particular sequences of random matrix polynomials.

\begin{itemize}
	\item \textbf{Kac Polynomials}.
	If $|\alpha_j^{(k)}|=c^j$ for all $j,k$, where  $c>0$, then 
		\begin{align*}
		\lim_{k\to\infty} \frac 1{2k} \ln \left(  \sum_{i=0}^k 	\frac{		|\alpha_i^{(k)}|^2}{|\alpha_k^{(k)}|^2}|z|^{2i}  \right)
		 &=
		-c + \lim_{k\to\infty}
		\frac 1{2k} \ln\left(\sum_{i=0}^{k} |cz|^{2i} \right)
		= U(z) = 
		\begin{cases}
		-c + \ln(|cz|), & |cz|>1,\\
		-c, & |cz|\le 1,
		\end{cases}
		\end{align*}
		where one can verify that $ U(z)$ is the logarithmic potential of $\bm 1_{S^1/c}$, the uniform measure on the circle of radius $1/c$ centred at zero. 
		
		 The same limit $U(z)$ with $c=1$, holds when we impose $\alpha_j^{(k)} = 0$ for all $j,k$ except $|\alpha_k^{(k)}| = |\alpha_0^{(k)}|  =1$.
	\item \textbf{Binomial or Elliptic Polynomials}. If $
	|\alpha_j^{(k)}|^2 = {k\choose j}c^{2j}$, for all $j,k$, where $c>0$, then 
	\begin{align*}
	\lim_{k\to\infty} \frac 1{2k} \ln \left(  \sum_{i=0}^k 	\frac{		|\alpha_i^{(k)}|^2}{|\alpha_k^{(k)}|^2}|z|^{2i}  \right)
	&=
	-c +\lim_{k\to\infty}
	\frac 1{2k} \ln\left(\sum_{i=0}^{k} {k\choose j}|z|^{2i} \right)
	=-c +\frac 12\ln\left(  |cz|^2+1 \right),
	\end{align*} 
	leading to a limit ESD with density 
	$$
	f(z) 
	= 
	\frac{c^2}{\pi(|cz|^2+1)^2},
	$$
	that coincides with the uniform measure on the Riemann sphere after a stereographic projection with respect to the circle with radius $1/c$.
	\item \textbf{Flat or Weyl Polynomials}.
	If 
	$
	|\alpha_j^{(k)}|^2 = k^jc^{2j}/j!$,
	for all $j,k$, where $c>0$, then
	\begin{align*}
	\lim_{k\to\infty} \frac 1{2k} \ln \left(  \sum_{i=0}^k 	\frac{		|\alpha_i^{(k)}|^2}{|\alpha_k^{(k)}|^2}|z|^{2i}  \right)
	&=-c+
	\lim_{k\to\infty} \frac 1{2k} \ln \left(  \sum_{i=0}^k 	\frac{		k^i/i!
	}
	{
	k^k/k!
	}|cz|^{2i}  \right)
=
-c+
\frac 12 \cdot
\begin{cases}
|cz|^2-1, & |cz|<1,\\
 \ln  |cz|^2, & |cz|\ge 1.
\end{cases}
	\end{align*}
	leading to the uniform measure on the disk of radius $1/c$ and centred in zero.
	\item \textbf{Hyperbolic Polynomials.} If $|\alpha_j^{(k)}|^2 = \Gamma(d+j)/(\Gamma(d)j!)c^{2j}$ where $c,d>0$ and $\Gamma(x) = \int_0^\infty e^{-t}t^{x-1} \, {{\rm d}}t$, then one can verify
	that this case can be reduced to that of Kac polynomials. Hence, the
	 limit ESD is again the measure $\bm 1_{S^1/c}$.
\end{itemize}

$ $\\
We note that all the derived limit measures are independent of $n$, and in particular we showed that the same results  know for scalar random polynomials \cite{TV14} extend to matrix polynomials of any size. Moreover, they coincide with the limit ESDs computed in Remark \ref{rmk:convergence_of_f_k_for_diverging_k}, suggesting the conjecture that they hold for any function $n(k)$, even those not bounded by $k^P$. 

Actually, with slight modifications, one can prove the same result for monic matrix polynomials, meaning $C_k = I_n$ and $\alpha_k^{(k)}=1$, thus generalizing a result of  \cite{BV20}. In particular, it is enough to add the hypothesis
\[
\left(
\sum_{i=0}^{k-1} |\alpha_i^{(k)}|^2|z|^{2i-2k}
\right)^{-1} = O(1) 
\]
for almost every $z$. 
The latter condition is satisfied, for example, when  $\alpha_{k-1}^{(k)} \ge \gamma>0$ for every $k$, and it includes the case $\alpha_i^{(k)} = 1$  and $X_i^{(k)}$ Gaussian discussed in \cite{BV20}.

\begin{remark}
	The hypotheses on the density functions of $X_j^{(k)}$ can actually be relaxed as long as we can ensure some bounds on the density of the entries $Y_k$ in $G_k(z)/\sigma$. For example, in the case $|\alpha_j^{(k)}|=1$, we can see that it suffices to impose the bound on the density only for $X_k^{(k)}$ and $X_0^{(k)}$, since we would have
	$$
	M(Y_k) \le 
	eM
	\min\left\{
		\frac{1-|z|^{-2k-2}}{1-|z|^{-2}}
	,
		\frac{1-|z|^{2k+2}}{1-|z|^{2}}
	\right\}
	$$
	that is bounded for almost any $z$. 
\end{remark}

\section{Further Work}\label{sec:concl}

We have rigorously obtained the limit of empirical spectral distribution for  complex i.i.d. matrix polynomials under rather mild assumptions on the distribution of their entries. 
In future work, it could be of interest to extend our results by considering, for instance, coefficients restricted to be real (and/or otherwise structured),  random variables with non-zero means, and singular matrix polynomials.

In particular, an interesting extension would be to study cases where the matrix polynomials presents infinite eigenvalues with non-zero asymptotic probability, probably leading to measures on the Riemann sphere. This would allow to relax the hypothesis of boundedness for the density functions of the random variables involved in the case $k\to\infty$.

Under suitable assumptions, we note that taking the average in \eqref{eq:UP1k} leads to an expression for the logarithmic potential of the average ESD, so another direction for potential future research is to seek the average ESD for fixed parameter $n,k$. In fact, this has already been done in same special case (\cite{BA12,EKS94,houghzeros} for Gaussian pencils), but never in the same generality as discussed in this paper.

\section*{Acknowledgements}

We acknowledge the computational resources provided by the Aalto Science-IT project. We are grateful to Carlos Beltr\'{a}n for illuminating discussions that inspired us to improve both the presentation and the generality of our results.

\bibliographystyle{abbrv}
\bibliography{randomthings3}

\appendix

\section{Appendix: Matrix Theory}

In this appendix we collect some useful results on eigenvalues and singular values of matrices.

\begin{lemma}[Weyl Inequalities \cite{WE49}]\label{res:Weyl}
	For every $n\times n$ matrix $A$ and any $1\le k\le n$, 
	\[
	\prod_{i=1}^{k} |\lambda_i(A)| \le 
	\prod_{i=1}^{k} \sigma_i(A), \qquad 
	\prod_{i=k}^{n} |\lambda_i(A)| \ge 
	\prod_{i=k}^{n} \sigma_i(A),
	\]
	where
	\[
	|\lambda_1(A)|\ge |\lambda_2(A)|\ge \dots 
	\ge |\lambda_n(A)|,\qquad
	\sigma_1(A)\ge \sigma_2(A)\ge \dots 
	\ge \sigma_n(A).
	\]
\end{lemma}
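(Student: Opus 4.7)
The plan is to derive the Weyl inequalities from the Schur triangularization together with the submatrix inequality for singular values. First I would fix $A \in \C^{n \times n}$ and take its Schur decomposition $A = QTQ^*$ with $Q$ unitary, $T$ upper triangular, and the diagonal entries of $T$ arranged so that $|T_{11}| \ge |T_{22}| \ge \dots \ge |T_{nn}|$; this ordering is possible because the diagonal entries are precisely the eigenvalues of $A$ and may be permuted by conjugating with a permutation matrix. Since singular values are invariant under unitary equivalence, $\sigma_i(T) = \sigma_i(A)$ for every $i$.

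For the first inequality, I would slice $T$ by the projection onto the first $k$ coordinates: let $T_k$ be the leading $k \times k$ principal submatrix of $T$. Because $T_k$ is upper triangular with diagonal $\lambda_1(A), \dots, \lambda_k(A)$,
\[
\prod_{i=1}^{k} |\lambda_i(A)| = |\det T_k| = \prod_{i=1}^{k} \sigma_i(T_k).
\]
Now I would invoke the interlacing inequality for submatrices, $\sigma_i(T_k) \le \sigma_i(T)$ for $1 \le i \le k$, which follows from the Courant--Fischer min-max characterization of singular values (restricting the test subspaces to those supported in the first $k$ coordinates shrinks the family, hence cannot increase the $i$-th singular value). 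Multiplying these inequalities yields $\prod_{i=1}^k \sigma_i(T_k) \le \prod_{i=1}^k \sigma_i(T) = \prod_{i=1}^k \sigma_i(A)$, and combining with the identity above gives the claim.

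For the second inequality, I would first assume $A$ invertible. Then $\lambda_i(A^{-1}) = 1/\lambda_{n+1-i}(A)$ and $\sigma_i(A^{-1}) = 1/\sigma_{n+1-i}(A)$ after re-sorting by modulus. Applying the first inequality to $A^{-1}$ with index $n-k+1$ and re-indexing $j = n+1-i$ turns the product $\prod_{i=1}^{n-k+1}$ over reciprocals into $\prod_{j=k}^{n}$, and after inverting the inequality one obtains $\prod_{j=k}^n |\lambda_j(A)| \ge \prod_{j=k}^n \sigma_j(A)$. The singular case follows by perturbing $A$ to $A + \varepsilon B$ for a generic $B$, which is a.s.\ invertible, and taking $\varepsilon \to 0^+$; both sides of the inequality are continuous in the entries of the matrix (singular values are continuous, and eigenvalues may be chosen continuously up to permutation, preserving the decreasing-modulus ordering in the limit).

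The main obstacle I expect is the submatrix inequality $\sigma_i(T_k) \le \sigma_i(T)$. Although standard, it requires care: one proves it via the variational formula $\sigma_i(M) = \max_{\dim V = i} \min_{x \in V, \|x\|=1} \|Mx\|$, observing that vectors in $\C^k$ can be embedded in $\C^n$ by padding with zeros, so the feasible set of test subspaces for $T_k$ injects into that of $T$, while the restriction to rows $1,\dots,k$ only further decreases the norm. Everything else (Schur decomposition, determinant-singular value identity, density of invertible matrices) is standard linear algebra.
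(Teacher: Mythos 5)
Your proof is correct, but note that the paper itself offers no proof of this lemma: it is quoted verbatim from Weyl's 1949 paper \cite{WE49} as a known auxiliary fact, so there is no internal argument to compare against line by line. Your route (Schur triangularization, $|\det T_k|=\prod_{i\le k}\sigma_i(T_k)$, then the submatrix inequality $\sigma_i(T_k)\le\sigma_i(T)$, with the second inequality obtained by applying the first to $A^{-1}$) is the standard modern textbook argument and is sound; it differs from Weyl's original derivation, which works with the $k$-th exterior power (compound matrix) $\Lambda^k A$, whose eigenvalues are the $k$-fold products $\lambda_{i_1}\cdots\lambda_{i_k}$ and whose spectral norm is $\sigma_1\cdots\sigma_k$, so the first inequality is just ``spectral radius $\le$ operator norm''; that route avoids interlacing altogether, while yours stays within elementary variational facts. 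Two small remarks on your write-up: the interlacing step you prove via Courant--Fischer is exactly the statement the paper already records as Theorem~\ref{thm:Interlacing} (Thompson's interlacing for submatrices, case $\alpha_i\ge\beta_i$ with $p=q=k$), so within this paper you could simply invoke it; and the perturbation argument for singular $A$ in the second inequality is unnecessary, since if $A$ is singular then $\lambda_n(A)=\sigma_n(A)=0$ and both products $\prod_{i=k}^{n}$ vanish, so the inequality holds trivially (your continuity argument is nevertheless valid, since the sorted vectors of eigenvalue moduli and of singular values depend continuously on the matrix).
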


\begin{lemma}[\cite{HJ91}]\label{res:sv_product_matrices}
	If $M,N$ are square complex matrices, then 
	\[
	\sigma_{2i}({AB}) \le\sigma_i(A)\sigma_i(B),
	\quad \forall i : 2i\le n,
	\qquad 
	 \sigma_{2i-1}(AB)\le \sigma_i(A)\sigma_i(B),
	 \quad \forall i : 2i-1\le n.
	\]
\end{lemma}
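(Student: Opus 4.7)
The plan is to derive both bounds from the general Weyl-type singular value inequality
\[
\sigma_{i+j-1}(AB) \le \sigma_i(A)\,\sigma_j(B),
\]
valid whenever $i+j-1\le n$. Once this is established, the lemma follows by two specializations. Taking $j=i$ yields the odd-index case $\sigma_{2i-1}(AB) \le \sigma_i(A)\sigma_i(B)$ directly. Taking instead $j=i+1$ yields $\sigma_{2i}(AB) \le \sigma_i(A)\sigma_{i+1}(B)$, and since singular values are non-increasing we have $\sigma_{i+1}(B)\le \sigma_i(B)$, giving the even-index case $\sigma_{2i}(AB)\le \sigma_i(A)\sigma_i(B)$.

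For the key inequality I would invoke the Eckart--Young characterization
\[
\sigma_k(X) = \min\bigl\{\|X-R\|: \rank(R)\le k-1\bigr\},
\]
which holds for any complex square matrix $X$. Choose optimal low-rank approximants $R_A$ and $R_B$, so that $\rank(R_A)\le i-1$, $\rank(R_B)\le j-1$, with $\|A-R_A\| = \sigma_i(A)$ and $\|B-R_B\| = \sigma_j(B)$. Expanding the product,
\[
AB \;=\; (A-R_A)(B-R_B) \;+\; \bigl[\,R_A B + A R_B - R_A R_B\,\bigr],
\]
the bracketed remainder has rank at most $(i-1)+(j-1) = i+j-2$. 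Applying Eckart--Young once more to $AB$ at index $i+j-1$, together with submultiplicativity of the spectral norm, yields
\[
\sigma_{i+j-1}(AB) \;\le\; \|(A-R_A)(B-R_B)\| \;\le\; \|A-R_A\|\,\|B-R_B\| \;=\; \sigma_i(A)\,\sigma_j(B),
\]
as desired.

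I do not anticipate any serious obstacle here; the only thing to get right is the rank bookkeeping in the decomposition of $AB$, which is essentially forced. An alternative route proceeds via the Courant--Fischer min-max characterization of $\sigma_{i+j-1}(AB)$ as a minimum over codimension-$(i+j-2)$ subspaces, and then intersects the subspaces on which $A$ and $B$ realize their $i$-th and $j$-th singular values; this avoids quoting Eckart--Young but is slightly more delicate to write out. Either approach produces the stated Weyl-type bound and, by the two specializations above, the lemma.
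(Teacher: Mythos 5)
Your argument is correct. The key inequality $\sigma_{i+j-1}(AB)\le\sigma_i(A)\sigma_j(B)$ is exactly the classical Weyl-type product bound, and your derivation of it is sound: with $\rank(R_A)\le i-1$, $\rank(R_B)\le j-1$ chosen optimally, the remainder $R_AB+AR_B-R_AR_B=R_AB+(A-R_A)R_B$ indeed has rank at most $i+j-2$, so the Eckart--Young characterization of $\sigma_{i+j-1}(AB)$ together with submultiplicativity of the spectral norm gives the bound, and the two specializations $j=i$ and $j=i+1$ (using monotonicity $\sigma_{i+1}(B)\le\sigma_i(B)$, with $i+1\le n$ automatic when $2i\le n$) yield precisely the two displayed inequalities. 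Note, however, that the paper itself gives no proof of this lemma: it is quoted directly from Horn and Johnson \cite{HJ91}, where the general inequality $\sigma_{i+j-1}(AB)\le\sigma_i(A)\sigma_j(B)$ appears as a standard result (usually proved via a Courant--Fischer/subspace-intersection argument, the alternative route you mention). So your contribution relative to the paper is a short self-contained proof of the cited fact; the low-rank-approximation decomposition you use is arguably the cleanest way to present it, since it avoids the dimension-counting bookkeeping of the min-max approach. The only cosmetic mismatch is notational: the statement names the matrices $M,N$ but the inequalities are written with $A,B$ (a typo in the source), and your proof consistently uses $A,B$, which is what the inequalities require.
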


\begin{lemma}[\cite{BO11}]\label{res:x^a_ub_product}
	If $M,N$ are square complex matrices, then for any $\alpha>0$ 
	\[
	\int x^\alpha {{\rm d}}\nu_{MN}
	\le 2
	\left( \int x^{2\alpha} {{\rm d}}\nu_{M}  \right)^{1/2}
	\left( \int x^{2\alpha} {{\rm d}}\nu_{N}  \right)^{1/2}.
	\]
\end{lemma}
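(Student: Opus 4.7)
The plan is to pass from the integral to a sum over singular values and exploit Lemma \ref{res:sv_product_matrices} to dominate $\sigma_j(MN)^\alpha$ by products of singular values of $M$ and $N$, then apply Cauchy--Schwarz.

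First I would write
\[
\int x^\alpha \, {\rm d}\nu_{MN} = \frac 1n \sum_{j=1}^{n} \sigma_j(MN)^\alpha,
\]
and split the sum according to whether the index $j$ is of the form $2i$ or $2i-1$. By Lemma \ref{res:sv_product_matrices}, in both cases we can bound the corresponding term by $\sigma_i(M)^\alpha \sigma_i(N)^\alpha$, which gives
\[
\frac 1n \sum_{j=1}^{n} \sigma_j(MN)^\alpha \le \frac 2n \sum_{i=1}^{\lceil n/2 \rceil} \sigma_i(M)^\alpha \sigma_i(N)^\alpha.
\]
Here the factor $2$ is precisely the one appearing in the statement, arising from the fact that the inequality $\sigma_{2i-1}(MN),\sigma_{2i}(MN) \le \sigma_i(M)\sigma_i(N)$ is used twice per $i$.

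Next I would apply the Cauchy--Schwarz inequality to the right-hand side, treating $\sigma_i(M)^\alpha$ and $\sigma_i(N)^\alpha$ as the two sequences, and then enlarge the ranges of summation from $i\le \lceil n/2\rceil$ to $i\le n$ (which only increases the quantities on the right, since singular values are nonnegative):
\[
\sum_{i=1}^{\lceil n/2 \rceil} \sigma_i(M)^\alpha \sigma_i(N)^\alpha \le \Biggl(\sum_{i=1}^{n} \sigma_i(M)^{2\alpha}\Biggr)^{1/2}\Biggl(\sum_{i=1}^{n} \sigma_i(N)^{2\alpha}\Biggr)^{1/2}.
\]
Dividing through by $n$ and distributing the $1/n$ as $1/\sqrt{n}\cdot 1/\sqrt{n}$ between the two square roots, the right-hand side becomes $2\bigl(\int x^{2\alpha}{\rm d}\nu_M\bigr)^{1/2}\bigl(\int x^{2\alpha}{\rm d}\nu_N\bigr)^{1/2}$, which is exactly the desired bound.

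There is no real obstacle here: the entire argument is essentially a bookkeeping exercise. The only subtle points are making sure the pairing in the first step covers all indices $j\in\{1,\dots,n\}$ (which works whether $n$ is even or odd, at the cost of one possibly redundant term when $n$ is odd), and matching the factor $1/n$ correctly so that the two normalized empirical singular value measures appear on the right-hand side.
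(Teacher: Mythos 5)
Your proof is correct: pairing the bounds $\sigma_{2i-1}(MN),\sigma_{2i}(MN)\le\sigma_i(M)\sigma_i(N)$ from Lemma \ref{res:sv_product_matrices}, raising them to the power $\alpha$, applying Cauchy--Schwarz, enlarging the summation range to $n$, and splitting $1/n$ between the two square roots yields exactly the stated inequality, with the factor $2$ coming from the two singular values of $MN$ controlled by each index $i$. The paper itself gives no proof (the lemma is quoted from \cite{BO11}), and your argument is essentially the standard one behind that reference, so nothing further is needed.
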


A result we use in our arguments is the following interlacing property of the singular values of submatrices.
\begin{theorem}[Interlacing Singular Values for Submatrices \cite{THO72}]\label{thm:Interlacing}
	Let $A$ be an $m\times n$ matrix, and $B$ a $p\times q$ submatrix of $A$ with singular values, respectively, 
	\[
	\alpha_1\ge \alpha_2 \ge\dots\ge \alpha_{\min(m,n)}, \qquad
	\beta_1\ge \beta_2 \ge\dots\ge \beta_{\min(p,q)}.
	\]
	Then
	\begin{align*}
	&\alpha_i\ge \beta_i, && i=1,2,\dots,\min(p,q),\\
	&\beta_i\ge \alpha_{i+(m-p)+(n-q)}, && i=1,2,\dots,\min(p+q-n,p+q-m).
	\end{align*}
\end{theorem}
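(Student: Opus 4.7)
The plan is to reduce the general two-parameter interlacing to the one-step case of removing a single row or a single column, and then iterate. The key auxiliary fact is: if $A'$ is obtained from $A$ by deleting a single column (respectively a single row), then for every valid index $i$
\[ \sigma_i(A) \ge \sigma_i(A') \ge \sigma_{i+1}(A). \]
This follows from classical Cauchy interlacing for Hermitian matrices applied to the Gram matrix $A^*A$ (respectively $AA^*$): deleting one column of $A$ produces a matrix $A'$ whose Gram matrix $(A')^*A'$ is a principal submatrix of $A^*A$ of one dimension smaller, and Cauchy interlacing for the eigenvalues of these Hermitian matrices yields the inequality for squared singular values. The row-deletion case is symmetric, using $AA^*$ in place of $A^*A$. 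A minor issue of bookkeeping is that when $m \neq n$ the Gram matrices have some zero eigenvalues; this is harmless if one adopts the convention $\sigma_j(A) = 0$ for $j > \min(m,n)$ in the inequalities above.

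Next, I will iterate. Since $B$ is a $p \times q$ submatrix of $A$, it can be reached from $A$ by a chain $A = A^{(0)}, A^{(1)}, \dots, A^{(k+l)} = B$ in which each step deletes either one row or one column, performing a total of $k := m - p$ row deletions and $l := n - q$ column deletions in any order. Composing the left-hand inequality of the one-step lemma along this chain gives
\[ \alpha_i = \sigma_i(A^{(0)}) \ge \sigma_i(A^{(1)}) \ge \dots \ge \sigma_i(A^{(k+l)}) = \beta_i, \]
for every $i$ for which $\beta_i$ is defined, namely $i \le \min(p,q)$. This yields the first asserted inequality.

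For the second inequality, I instead chain the right-hand side of the one-step lemma: each step gives $\sigma_i(A^{(s)}) \ge \sigma_{i+1}(A^{(s-1)})$, so composing $k + l$ such inequalities shifts the index on $A$ by exactly $k + l$, producing $\beta_i \ge \alpha_{i + (m-p) + (n-q)}$. The stated range $i \le \min(p+q-n, p+q-m)$ is precisely the condition $i + (m-p) + (n-q) \le \min(m,n)$, which ensures that the right-hand singular value $\alpha_{i+(m-p)+(n-q)}$ is actually within the indexing range.

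The only real step requiring thought is the reduction to Cauchy interlacing for a single row or column removal; once that is in hand, everything reduces to a combinatorial iteration, and the stated index ranges emerge automatically. There is no serious obstacle: the result is essentially a corollary of Cauchy interlacing together with careful index tracking.
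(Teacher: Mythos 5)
Your proof is correct. Note that the paper itself offers no proof of this statement: it is quoted verbatim as a known result of Thompson \cite{THO72}, so there is no internal argument to compare against. Your derivation is the standard self-contained route: the one-step lemma via Cauchy interlacing applied to the Gram matrices $A^*A$ (column deletion) and $AA^*$ (row deletion), with the zero-padding convention $\sigma_j = 0$ for $j > \min(m,n)$ so that the interlacing of Gram-matrix eigenvalues translates directly into $\sigma_i(A) \ge \sigma_i(A') \ge \sigma_{i+1}(A)$ in all rectangular cases, followed by iteration along a chain of $(m-p)$ row and $(n-q)$ column deletions. The bookkeeping checks out: chaining the left inequalities gives $\alpha_i \ge \beta_i$ for $i \le \min(p,q)$; chaining the right inequalities (at index $i+t$ after $t$ deletions) gives $\beta_i \ge \alpha_{i+(m-p)+(n-q)}$ in the padded sense, and the stated range $i \le \min(p+q-n,\, p+q-m)$ is exactly the condition $i+(m-p)+(n-q) \le \min(m,n)$ under which the right-hand side is a genuine singular value (and since $\min(p+q-n,p+q-m) \le \min(p,q)$, so is $\beta_i$). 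The only implicit ingredients — that a $p\times q$ submatrix is reachable by successive single deletions, and that deleting a column of $A$ deletes the corresponding row and column of $A^*A$ — are immediate, so the argument stands as a complete elementary proof of the cited result.
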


\section{Appendix: Logarithmic Potential}

In this appendix we provide the proofs of some technical results on logarithmic potential of finite measures.

\begin{lemma}\label{res:conv_log}
	Let $\mu$ be a probability measure, and
	$U(z)\in L^1_{loc}(\f C)$ with  $\Delta U = 2\pi\mu$. 
	Suppose that $\serie \mu$ is a sequence of probability measures such that $U_{\mu_n}(z) \xrightarrow{n\to \infty} U(z)$ almost everywhere. If there exists $R\ge 1$ such that 
	\[
	\limsup_{n\to\infty} \int_{|z|\ge R} \ln |z| \, {{\rm d}} \mu_{n}<\infty,
	\]
	then 
	$\mu_{n} \to \mu$ weakly.
\end{lemma}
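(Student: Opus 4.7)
The plan is to exploit the distributional identity $\Delta U_{\mu_n} = 2\pi \mu_n$ (Lemma \ref{res:laplacian_of_potential_is measure}) and upgrade the a.e.\ pointwise convergence $U_{\mu_n} \to U$ into $L^1_{loc}$ convergence, from which weak convergence of measures follows by taking Laplacians distributionally. First, the tail hypothesis together with Markov's inequality gives $\mu_n(|z| \geq T) \leq (\ln T)^{-1} \int_{|z| \geq T} \ln|z|\,{\rm d}\mu_n \leq C/\ln T$ for $T \geq R$, which vanishes uniformly in $n$ as $T \to \infty$, so $\{\mu_n\}$ is tight.

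Next, I would establish a uniform upper bound on $U_{\mu_n}$ on any compact $K$: choosing $R' > \max_{z \in K}|z| + R + 1$ and splitting $U_{\mu_n}(z) = \int_{|w| \leq R'} \ln|z-w|\,{\rm d}\mu_n(w) + \int_{|w| > R'} \ln|z-w|\,{\rm d}\mu_n(w)$, the first piece is at most $\ln(|z|+R') \leq \ln(2R')$ while the second is at most $\ln 2 + \int_{|w| > R'} \ln|w|\,{\rm d}\mu_n \leq \ln 2 + C$ by the tail bound. Since each $U_{\mu_n}$ is the logarithmic potential of a positive measure, it is subharmonic; the classical compactness principle for subharmonic families asserts that a sequence locally uniformly bounded above either diverges to $-\infty$ locally uniformly or is precompact in $L^1_{loc}$. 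Pointwise convergence to $U \in L^1_{loc}$ (hence finite a.e.) excludes the first alternative, and a subsequence-of-subsequence argument, exploiting a.e.\ uniqueness of the limit, forces $U_{\mu_n} \to U$ in $L^1_{loc}(\mathbb{C})$.

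Finally, Lemma \ref{res:L1loc_conv_is_dist_conv} lifts this to distributional convergence $U_{\mu_n} \to U$ in $D'(\mathbb{C})$, hence $\Delta U_{\mu_n} \to \Delta U$; invoking Lemma \ref{res:laplacian_of_potential_is measure} and the hypothesis $\Delta U = 2\pi \mu$ recasts this as $\mu_n \to \mu$ as distributions, and Lemma \ref{res:weak_conv_is_dist_conv} combined with tightness yields the desired weak convergence. The main obstacle is the upgrade from a.e.\ to $L^1_{loc}$ convergence of the potentials in the second paragraph: this is not a purely distribution-theoretic manipulation but relies on the potential-theoretic fact that locally-uniformly-bounded-above subharmonic families are compact in $L^1_{loc}$. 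A tempting direct route via Fubini collapses into the tautology $\int \Delta\phi(z) \ln|z-w|\,{\rm d}z = 2\pi\phi(w)$, so the subharmonicity step appears essential.
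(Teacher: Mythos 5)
Your argument is correct, but the central step is handled by a genuinely different mechanism than in the paper. The paper never invokes subharmonicity: it proves a two-sided bound $\int_K |U_{\mu_n}|\,{\rm d}z<\infty$ on every compact $K$ by splitting $\ln|z-z'|$ at $|z-z'|=1$ and applying Fubini--Tonelli, with the refinement that the bound tends to $0$ as $|K|\to 0$ (see \eqref{eq:u.i.p} and \eqref{eq:u.i.}); this uniform integrability of $\{U_{\mu_n}\}$, combined with the a.e.\ convergence $U_{\mu_n}\to U$ and the absolute continuity of $\int_K|U|$, gives a Vitali-type passage to $U_{\mu_n}\to U$ in $L^1(K)$, entirely by elementary estimates. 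You instead establish only a local uniform \emph{upper} bound on the potentials and then invoke the compactness principle for subharmonic functions (locally uniformly bounded above implies either locally uniform divergence to $-\infty$ or precompactness in $L^1_{loc}$), excluding the degenerate alternative via the a.e.\ finite limit and identifying the limit by a subsequence-of-subsequence argument; the concluding step (Lemma \ref{res:L1loc_conv_is_dist_conv}, continuity of $\Delta$ on $D'(\f C)$, Lemma \ref{res:laplacian_of_potential_is measure}, Lemma \ref{res:weak_conv_is_dist_conv}) is the same in both proofs, and your tightness remark is harmless but redundant given Lemma \ref{res:weak_conv_is_dist_conv}. Your route is shorter and needs less than the paper's two-sided control, at the price of importing a nontrivial potential-theoretic theorem not stated in the paper; the paper's route is self-contained. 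Two small points you should make explicit: since the hypothesis is only a $\limsup$, discard finitely many initial indices so that $\int_{|z|\ge R}\ln|z|\,{\rm d}\mu_n\le C$ for all $n$ considered (this also guarantees each $U_{\mu_n}$ is well defined, lies in $L^1_{loc}$, and satisfies $\Delta U_{\mu_n}=2\pi\mu_n$); and justify that $U_{\mu_n}$ is subharmonic even though $\mu_n$ need not have compact support, e.g.\ by splitting $\mu_n$ at a large radius, the near part giving a classical subharmonic potential and the far part a function harmonic on the compact set under the tail integrability.
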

\begin{proof}
	Observe first that
	\begin{equation}\label{resapp1}
	\limsup_{n\to\infty} \int_{|z|\ge R} \ln |z| \, {{\rm d}} \mu_{n}<\infty
	\implies 
	\sup_{n\ge N} \int_{|z|\ge R} \ln |z| \, {{\rm d}} \mu_{n}<C
	\end{equation}
	for a certain index $N$ and a constant $C>0$. We can  suppose, without loss of generality, that $N=1$. As a consequence, every $\mu_n$ is integrable in a neighbourhood of infinity, and by Lemma \ref{res:laplacian_of_potential_is measure}, the functions $U_{\mu_n}$ are all in $L^1_{loc}(\f C)$.

	Let now $K$ be any relatively compact subset of $\f C$, and in particular suppose that all $z\in K$ satisfy $|z|\le M$ for some $M>1$.  $U_{\mu_n}$  are in $L^1(K)$, so we can estimate their integral. 
	\begin{align*}
	\int_K |U_{\mu_n}(z)| \, {{\rm d}}z  &= 
	\int_K \left|
	\int_{\f C} \ln |z -z'|\,  {{\rm d}} \mu_n(z')
	\right| \, {{\rm d}}z \\
	&=
	\int_K \left|
	\int_{|z -z'|\le 1} \ln |z -z'|\,  {{\rm d}} \mu_n(z')
	+
	\int_{|z -z'|>1} \ln |z -z'|\,  {{\rm d}} \mu_n(z')
	\right| \, {{\rm d}}z\\
	&\le
	\int_K 
	\int_{|z -z'|\le 1} -\ln |z -z'|\,  {{\rm d}} \mu_n(z') \, {{\rm d}}z
	+
	\int_K 
	\int_{|z -z'|>1} \ln |z -z'|\,  {{\rm d}} \mu_n(z')
	\, {{\rm d}}z.\\
	\end{align*}
	In  both terms, we can invoke the Fubini-Tonelli theorem and swap the order of integration. For the first term,
	\[
	\int_K 
	\int_{|z -z'|\le 1} -\ln |z -z'|\,  {{\rm d}} \mu_n(z') \, {{\rm d}}z
	\le
	\int_{\f C} 
	\int_{z\in K, |z -z'|\le 1}
	-\ln |z -z'|\, {{\rm d}}z\,  {{\rm d}} \mu_n(z'),
	\]	
	and switching to polar coordinates,
	\[
	\le \int_{\f C} 
	\int_{|z -z'|\le 1}
	-\ln |z -z'|\, {{\rm d}}z\,  {{\rm d}} \mu_n(z') 
	\le 
	\int_{\f C}
	2\pi 
	\int_{0}^1
	-r\ln r\, {{\rm d}}r\,  {{\rm d}} \mu_n(z') 
	= \frac \pi 2.
	\]
	Actually, if $|K|\le \pi$, one can find a better bound. Indeed, in this case, we do not integrate on the whole unit  disk, but we find that it is dominated by the integral on the disk with radius $\sqrt{|K|/\pi}$, obtaining the bound
	\[
	\le \int_{\f C} 
	\int_{|z -z'|\le \sqrt{|K|/\pi}}
	-\ln |z -z'|\, {{\rm d}}z\,  {{\rm d}} \mu_n(z') 
	\le 
	\int_{\f C}
	2\pi 
	\int_{0}^{\sqrt{|K|/\pi}}
	-r\ln r\, {{\rm d}}r\,  {{\rm d}} \mu_n(z') 
	= \frac {|K|}{2} \left(1 - \ln \frac{|K|}{\pi} \right).
	\]
	For the second term, we can use \eqref{resapp1} so that 
	\begin{align*}
	\int_K 
	\int_{|z -z'|\ge 1} \ln |z -z'|\,  {{\rm d}} \mu_n(z') \, {{\rm d}}z
	&\le 
	\int_{\f C} 
	\int_{z\in K, |z -z'|\ge 1}
	\ln |z -z'|\, {{\rm d}}z\,  {{\rm d}} \mu_n(z') \\ &
	\le 
	\int_{\f C} 
	\int_{z\in K}
	\ln (M +|z'|)\, {{\rm d}}z\,  {{\rm d}} \mu_n(z') \\ &
	= 
	\int_{\f C} 
	|K|
	\ln (M +|z'|)\,  {{\rm d}} \mu_n(z') \\ &
	=
	\int_{|z'|<R} 
	|K|
	\ln (M +|z'|)\,  {{\rm d}} \mu_n(z') 
	+
	\int_{|z'|\ge R} 
	|K|
	\ln (M +|z'|)\,  {{\rm d}} \mu_n(z') \\
	&
	\le 
	|K|
	\ln (M +R)
	+
	\int_{|z'|\ge R} 
	|K|
	\ln ((M/R+1)|z'|)\,  {{\rm d}} \mu_n(z') \\
	&
	\le 
	|K|
	\ln (M +R)
	+
	|K|
	\ln (M/R+1)
	+
	|K|
	\int_{|z'|\ge R} 
	\ln |z'|\,  {{\rm d}} \mu_n(z') \\
	&
	\le 
	|K|(
	2\ln (M +R)
	-\ln (R)
	+
	C).
	\end{align*}
	As a consequence, when $|K|> \pi$ we have that $\int_K |U_{\mu_n}(z)| \, {{\rm d}}z $ is bounded from above by
	\begin{equation}\label{eq:u.i.p}
	\int_K |U_{\mu_n}(z)| \, {{\rm d}}z
	\le 
	\frac \pi 2
	+
	|K|(
	2\ln (M +R)
	-\ln (R)
	+
	C)
	<\infty,
	\end{equation}
	and when $|K|\le \pi$, we get
	\begin{equation}\label{eq:u.i.}
	\int_K |U_{\mu_n}(z)| \, {{\rm d}}z
	\le 
	\frac {|K|}{2} \left(1 - \log \frac{|K|}{\pi} \right)
	+
	|K|(
	2\ln (M +R)
	-\ln (R)
	+
	C)
	\xrightarrow{|K|\to 0}
	0.
	\end{equation}
	If we now fix $\ve >0$, then 
	\begin{align*}
	\int_K |U_{\mu_n}(z)-U(z)| \, {{\rm d}}z &=
	\int_K |U_{\mu_n}(z)-U(z)|\bm 1_{|U_{\mu_n}(z)-U(z)|\le \ve } \, {{\rm d}}z
	+
	\int_K |U_{\mu_n}(z)-U(z)|\bm 1_{|U_{\mu_n}(z)-U(z)|> \ve } \, {{\rm d}}z\\
	&\le 
	|K|\ve 
	+
	\int_K |U_{\mu_n}(z)|\bm 1_{|U_{\mu_n}(z)-U(z)|> \ve } \, {{\rm d}}z
	+
	\int_K |U(z)|\bm 1_{|U_{\mu_n}(z)-U(z)|> \ve } \, {{\rm d}}z.
	\end{align*}
	Define $E_n:= \{z\in K: |U_{\mu_n}(z)-U(z)|> \ve \}$ and observe that in the limit $n \rightarrow \infty$ it must be $|E_n|\to 0$ since $U_{\mu_n}\to U$. 
	Since $U(z)\in L^1(K)$, we get that $\int_K |U(z)|\bm 1_{E_n} \, {{\rm d}}z\xrightarrow{|E_n|\to 0}
	0$. Moreover, 
	since $E_n$ is relatively compact and $|E_n|\to 0$, then
	\eqref{eq:u.i.} yields
	\[
	\int_K |U_{\mu_n}(z)|\bm 1_{E_n } \, {{\rm d}}z
	=
	\int_{E_n} |U_{\mu_n}(z)|\, {{\rm d}}z
	\xrightarrow{|E_n|\to 0}
	0
	\]
	so 
	\[
	\limsup_{n\to\infty}\int_K |U_{\mu_n}(z)-U(z)| \, {{\rm d}}z 
	\le |K|\ve 
	\] 
	for every $\ve>0$, and in particular, $U_{\mu_n}\xrightarrow{L^1(K)}U$.

	To conclude, we note that $U_{\mu_n}$ and $U$ are locally integrable, and thus they are Schwartz distributions. From Lemma \ref{res:L1loc_conv_is_dist_conv}, $U_{\mu_n}\xrightarrow{L^1_{loc}}U$ implies that $U_{\mu_n}\xrightarrow{D'(\f C)}U$. 
	Moreover,
	the Laplacian operator $\Delta$ is continuous on the space of distributions, and 
	since $\mu_n$ are integrable in a neighbourhood of infinity by \eqref{resapp1}, then  $\Delta U_{\mu_n} = 2\pi\mu_n\in D'(\f C)$ from Lemma \ref{res:laplacian_of_potential_is measure}. As a consequence $\Delta U_{\mu_n} = 2\pi \mu_n \xrightarrow{D'(\f C)}2\pi \mu = \Delta U$,
	that yields the weak convergence of measures due to Lemma \ref{res:weak_conv_is_dist_conv}.
\end{proof}

\section{Appendix: Matrix Polynomials}\label{sec:app}
In this appendix we state and prove  two intermediate lemmata that are needed for our main results.
In the proofs, we borrow in part arguments from \cite{BO11}, and we use the following $kn\times kn$ block diagonal matrix $D_n$ as a normalization factor. 
\begin{equation}\label{eq:D}
D_n:=\begin{bmatrix}
\frac 1{\sqrt n}I_n &  &  & \\
&I_n & & \\
&& \ddots & \\
& && I_n  
\end{bmatrix}.
\end{equation}

\begin{lemma}\label{res:log_uia}
	Let $A_n$, $B_n$ be the pencil associated with the random matrix polynomial \eqref{eq:Px}, where $\alpha_k=1$.  We have that 
	\[
	\limsup_{n\to\infty} \int_{|z|\ge 1} \ln |z| \, {{\rm d}} \mu_{A_n^{-1}B_n}<\infty.
	\]
\end{lemma}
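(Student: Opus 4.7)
The plan is to start from Jensen's identity $\frac{1}{2\pi}\int_0^{2\pi}\log|e^{i\theta}-a|\,{{\rm d}}\theta = \log^+|a|$ applied to the monic characteristic polynomial $\chi(z) := \det(zI - M_n) = \det P_n(z)/\det C_k$, whose roots are exactly the eigenvalues of $M_n = A_n^{-1}B_n$. Summing this identity over the $kn$ roots and dividing by $kn$ yields the exact identity
\[\int_{|z|\ge 1}\log|z|\,{{\rm d}}\mu_{M_n}(z) = \frac{1}{2\pi kn}\int_0^{2\pi}\log|\det P_n(e^{i\theta})|\,{{\rm d}}\theta - \frac{1}{kn}\log|\det C_k|.\]
The key observation is that for each fixed $\theta$, the matrix $P_n(e^{i\theta}) = \sum_{j=0}^{k}\alpha_j C_j e^{ij\theta}$ has i.i.d.\ entries with mean $0$ and common variance $\sigma^2 := \sum_{j=0}^{k}|\alpha_j|^2 \ge |\alpha_k|^2 = 1$, so that $P_n(e^{i\theta})/\sigma$ fits the framework of Theorem \ref{res:full_random_matrices} exactly like $C_k$.

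Applying Lemma \ref{res:replacement_false} to the pair $P_n(e^{i\theta})/(\sigma\sqrt{n})$ and $C_k/\sqrt{n}$ --- whose singular value distributions both converge almost surely to the Marchenko-Pastur law (item 3 of Theorem \ref{res:full_random_matrices}) and with respect to which $\log$ is uniformly integrable (item 2 of Theorem \ref{res:full_random_matrices}) --- would yield, pointwise in $\theta$,
\[\frac{1}{n}\log|\det P_n(e^{i\theta})| - \frac{1}{n}\log|\det C_k| \xrightarrow{n\to\infty} \log\sigma \quad \text{almost surely}.\]
Integrating this convergence in $\theta$ and dividing by $k$ would then conclude that $\int_{|z|\ge 1}\log|z|\,{{\rm d}}\mu_{M_n} \to (\log\sigma)/k$ almost surely, a finite constant and therefore a bound on the required $\limsup$.

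The principal obstacle is exchanging the almost-sure limit in $n$ with the integration in $\theta$, because the full-probability event provided by Lemma \ref{res:replacement_false} depends on $\theta$ and one cannot intersect uncountably many null sets. My plan to resolve this is a Fubini-type argument on the product space $\Omega\times[0,2\pi]$: the uniform boundedness of $x^\beta + x^{-\beta}$ for $\nu_{C_k/\sqrt{n}}$ (item 2 of Theorem \ref{res:full_random_matrices}), together with the analogous property for $\nu_{P_n(e^{i\theta})/(\sigma\sqrt{n})}$, produces --- via the interlacing Theorem \ref{thm:Interlacing} applied to $P_n(e^{i\theta})$ and the block normalization $D_n$ of \eqref{eq:D} --- a $\theta$-measurable integrable dominating function, after which the dominated convergence theorem can be invoked to close the argument.
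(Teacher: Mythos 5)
Your Jensen's-formula identity is correct and this is a genuinely different route from the paper (which never evaluates $P_n$ on the unit circle: it bounds $\int_{|z|\ge1}\ln|z|\,{\rm d}\mu_{A_n^{-1}B_n}$ by $\int_{x\ge1}\ln x\,{\rm d}\nu_{A_n^{-1}B_n}$ via Lemma \ref{res:Weyl}, splits the pencil into the factors $D_nA_n$ and $D_nB_n$, and finishes with the $x^{-\alpha}$ uniform bound of Theorem \ref{res:full_random_matrices}-2 for the $A_n$ factor and a second-moment/strong-law bound on $\|D_nB_n\|_F^2$ for the $B_n$ factor). Your pointwise-in-$\theta$ step is also fine: for fixed $\theta$ the entries of $P_n(e^{i\theta})$ are i.i.d.\ with variance $\sigma^2=\sum_j|\alpha_j|^2$, and Lemma \ref{res:replacement_false} gives the a.s.\ convergence you state, with the null set handled by Fubini exactly as in the proof of Theorem \ref{thm:esdmonicninf}. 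The gap is the last step. To pass the limit under $\int_0^{2\pi}{\rm d}\theta$ you need an integrable-in-$\theta$ bound on the \emph{negative} part of $\frac1n\ln|\det P_n(e^{i\theta})|$ that is uniform in $n$, and the tools you invoke do not deliver it: Theorem \ref{res:full_random_matrices}-2 is an almost-sure statement for each fixed $\theta$, whose $\limsup$ bound is a random quantity depending on $\theta$ with no quantitative or jointly measurable control, so it cannot be integrated into a single dominating function; and Theorem \ref{thm:Interlacing} together with $D_n$ of \eqref{eq:D} concerns submatrices of the companion pencil and has no bearing on the $n\times n$ matrix $P_n(e^{i\theta})$. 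As written, the dominated-convergence step is asserted, not proved, and it is in fact the only hard point of your approach (small singular values of $P_n(e^{i\theta})$ when $e^{i\theta}$ is close to an eigenvalue are exactly what must be ruled out uniformly in $n$).

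The good news is that your strategy can be repaired without any domination, because the lemma only asks for a $\limsup<\infty$, i.e.\ a one-sided bound, and the hard side of your estimate is not needed. From your identity, bound the first term from above pointwise in $\theta$ by AM--GM on singular values, $|\det P_n(e^{i\theta})|\le\bigl(\|P_n(e^{i\theta})\|_F^2/n\bigr)^{n/2}$, together with $\|P_n(e^{i\theta})\|_F\le\sum_j|\alpha_j|\,\|C_j\|_F$ and the strong law $\|C_j\|_F^2/n^2\xrightarrow{a.s.}1$; this gives $\frac1{n}\ln|\det P_n(e^{i\theta})|\le\frac12\ln n+O(1)$ a.s., uniformly in $\theta$. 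For the second term write $-\frac1n\ln|\det C_k|=-\frac12\ln n-\int\ln x\,{\rm d}\nu_{C_k/\sqrt n}$ and use $-\ln x\le x^{-\beta}/\beta$ on $(0,1]$ together with the a.s.\ uniform boundedness of $x^{-\beta}$ for $\{\nu_{C_k/\sqrt n}\}_n$ (Theorem \ref{res:full_random_matrices}-2) to get $-\frac1n\ln|\det C_k|\le-\frac12\ln n+O(1)$ a.s. The $\frac12\ln n$ terms cancel and the $\limsup$ is finite, which is all the lemma claims. With this correction your argument is complete and arguably more elementary than the paper's; what the paper's route buys instead is a scheme (majorization plus moment bounds on the pencil factors) that transfers almost verbatim to the $k\to\infty$ regime of Lemma \ref{res:log_uik}, where the circle-average trick would be harder to control.
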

\begin{proof}
	Recall that
	\[
	A_n:= 
	\begin{bmatrix}
	C_{k} &  & & \\
	&I_n & &  \\
	&& \ddots  & \\
	&& & I_n  
	\end{bmatrix} 
	,\qquad 
	B_n:=\begin{bmatrix}
	-\alpha_{k-1}C_{k-1} & \dots & -\alpha_1C_1 & -\alpha_0C_0\\
	I_n & & & \\
	& \ddots & & \\
	& & I_n & 
	\end{bmatrix}
	\]
	and that, thanks to Theorem \ref{res:full_random_matrices}-1, we can always consider $A_n$ invertible.\\
	
	\noindent Note that
	thanks to Lemma \ref{res:Weyl}, and for any $\alpha>0$, 
	\begin{align*}
	\int_{|z|\ge 1} \ln |z| \, {{\rm d}} \mu_{A_n^{-1}B_n}
	& = 
	\frac 1{nk} 
	\sum_{|\lambda_i| \ge 1} \ln|\lambda_i(A_n^{-1}B_n)| = 
	\frac 1{nk} 
	\ln \left(
	\prod_{|\lambda_i| \ge 1} |\lambda_i(A_n^{-1}B_n)|
	\right)\\
	& \le 
	\frac 1{nk} 
	\ln \left(
	\prod_{|\lambda_i| \ge 1} \sigma_i(A_n^{-1}B_n)
	\right)  \le 
	\frac 1{nk} 
	\ln \left(
	\prod_{\sigma_i \ge 1} \sigma_i(A_n^{-1}B_n)
	\right)\\
	&= 
	\int_{x \ge 1} \ln x \, {{\rm d}} \nu_{A_n^{-1}B_n} \le c_\alpha + 
	\int_{\f R} x^\alpha \, {{\rm d}} \nu_{A_n^{-1}B_n},
	\end{align*}
	where $c_\alpha$ is a constant that depends only on $\alpha$. 
	Moreover, from Lemma \ref{res:x^a_ub_product},
	\[
	\int x^\alpha \,{{\rm d}}\nu_{A_n^{-1}B_n}
	\le 2
	\left( \int x^{2\alpha} \,{{\rm d}}\nu_{(D_nA_n)^{-1}}  \right)^{1/2}
	\left( \int x^{2\alpha} \,{{\rm d}}\nu_{D_nB_n}  \right)^{1/2}
	= 2
	\left( \int x^{-2\alpha} \,{{\rm d}}\nu_{D_nA_n}  \right)^{1/2}
	\left( \int x^{2\alpha} \,{{\rm d}}\nu_{D_nB_n}  \right)^{1/2}
	\]
	where $D_n$ is the normalization matrix in \eqref{eq:D}. As a consequence, it is enough to prove that there exists an $\alpha>0$ for which $x^{-\alpha}$ is u.b. for $\{ \nu_{D_nA_n} \}_n$ and $x^{\alpha}$ is u.b. for $\{ \nu_{D_nB_n} \}_n$.  Observe that set of the singular values of $D_nA_n$ consists of $nk-n$ singular values equal to $1$ and of the $n$ singular values of $C_k/\sqrt n$. Theorem \ref{res:full_random_matrices}-2 shows that there exists $\alpha>0$ for which  
	\begin{align*}
	\limsup_{n\to\infty}	\int x^{-\alpha} \,{{\rm d}}\nu_{D_nA_n} & =
	\frac 1k \limsup_{n\to\infty}	\int x^{-\alpha} \,{{\rm d}}\nu_{C_k/\sqrt n} + \frac{k-1}{k} <	\infty.
	\end{align*} 
Moreover, for $\alpha=2$, we have that
\begin{align*}
\int x^{2} \,{{\rm d}}\nu_{D_nB_n} & = \frac 1{nk} \|D_nB_n\|_F^2\\
&= \frac {n(k-1)}{nk} + 	\frac 1k \sum_{j=0}^{k-1}
|\alpha_j|^2
\frac 1{n^2}  \|C_j\|_F^2 
\end{align*} 
and by strong law of large numbers, every term $\|C_j\|_F^2 /n^2$ converges almost surely to $1$, so
\[
\lim_{n\to\infty} \int x^{2} \,{{\rm d}}\nu_{D_nB_n}  =
\frac{k-1}k + 	\frac 1k \sum_{j=0}^{k-1}
|\alpha_j|^2<\infty. 
\]
\end{proof}

\begin{lemma}\label{res:log_uik}
	Let $A_k$ and $B_k$ be the 
	pencil associated to the matrix polynomial  \eqref{eq:Pxk}, where $n=n(k)$
	 is a function for which 
	there exists a constant $P>0$ such that $n(k) = O(k^P)$. 
	Moreover, suppose that 
	the coefficients $\alpha_j^{(k)}$ satisfy
		$$\ln \left(
		1 +\sum_{j = 0, \dots, k-1} 	|\alpha_j^{(k)}|^2
		\right) =  O(k).$$
	 Then, almost surely,
	\[
	\limsup_{k\to\infty} \int_{|z|\ge 1} \ln |z| \, {{\rm d}} \mu_{A_k^{-1}B_k}<\infty.
	\]
\end{lemma}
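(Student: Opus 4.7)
The plan is to apply Jensen's formula to the scalar polynomial $f(z)=\det P_{n,k}(z)$. By Theorem~\ref{res:full_random_matrices}-4 the leading coefficient $C_k^{(k)}$ is almost surely invertible, so $f$ has degree $nk$, leading coefficient $\det C_k^{(k)}$, and its roots (with multiplicity) are precisely the $nk$ eigenvalues $\lambda_1,\dots,\lambda_{nk}$ of $A_k^{-1}B_k$. Jensen's formula at radius $R=1$ then yields
\begin{equation*}
\sum_{|\lambda_i|\ge 1}\ln|\lambda_i| = \frac{1}{2\pi}\int_0^{2\pi}\ln|\det P_{n,k}(e^{i\theta})|\,d\theta - \ln|\det C_k^{(k)}|,
\end{equation*}
so after dividing by $nk$ it suffices to show that, almost surely as $k\to\infty$, the first summand on the right divided by $nk$ is $O(1)$ and the second divided by $nk$ is $o(1)$.

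For the contour integral I would first use the AM--GM bound $\ln|\det M|\le (n/2)\ln(\|M\|_F^2/n)$ on the squared singular values of $M=P_{n,k}(e^{i\theta})$, then apply Jensen's inequality for the concavity of $\ln$ together with the Parseval identity
\begin{equation*}
\frac{1}{2\pi}\int_0^{2\pi}\|P_{n,k}(e^{i\theta})\|_F^2\,d\theta = \sum_{j=0}^{k}|\alpha_j^{(k)}|^2\,\|C_j^{(k)}\|_F^2,
\end{equation*}
to obtain
\begin{equation*}
\frac{1}{2\pi nk}\int_0^{2\pi}\ln|\det P_{n,k}(e^{i\theta})|\,d\theta \le \frac{1}{2k}\ln\!\left(\frac{1}{n}\sum_{j=0}^{k}|\alpha_j^{(k)}|^2\|C_j^{(k)}\|_F^2\right).
\end{equation*}
Since $\Expect\|C_j^{(k)}\|_F^2=n^2$, Markov's inequality at level $n^2k^3$, a union bound over $j\in\{0,\dots,k\}$, and Borel--Cantelli give $\|C_j^{(k)}\|_F^2\le n^2k^3$ almost surely for every $j$ and all sufficiently large $k$; plugging this bound in and using $n=O(k^P)$ together with the hypothesis $\ln\bigl(1+\sum_j|\alpha_j^{(k)}|^2\bigr)=O(k)$ makes the upper bound $O(1)$ almost surely.

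For the determinant term, the same AM--GM bound yields $n^{-1}\ln|\det C_k^{(k)}|\le (1/2)\ln(\|C_k^{(k)}\|_F^2/n)=O(\ln k)$ on the same event, while Theorem~\ref{res:full_random_matrices}-4 combined with Borel--Cantelli (applied at the thresholds $k^{-s}$ with $s>(5P+2)/4$) gives $\sigma_n(C_k^{(k)})\ge k^{-s}$ almost surely for large $k$, so $n^{-1}\ln|\det C_k^{(k)}|\ge \ln\sigma_n(C_k^{(k)})\ge -s\ln k$. Hence $\bigl|\ln|\det C_k^{(k)}|\bigr|/(nk)=O(\ln k/k)=o(1)$, and combining the two bounds gives the claim. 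The main obstacle is conceptual: the singular-value splitting used in Lemma~\ref{res:log_uia} (via Lemma~\ref{res:x^a_ub_product}) cannot be reused here, because under the hypothesis $\ln\bigl(1+\sum|\alpha_j^{(k)}|^2\bigr)=O(k)$ the Frobenius norm of $D_kB_k$ can grow like $e^{O(k)}$, so any such product estimate would blow up; Jensen's formula sidesteps this by reducing everything to a single Fourier integral in which Parseval's cancellation trades the $e^{O(k)}$ growth for an $O(k)$ term that is absorbed by dividing by $k$.
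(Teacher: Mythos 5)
Your argument is correct, but it is genuinely different from the paper's. The paper stays inside the singular-value machinery of Lemma \ref{res:log_uia}: it bounds $\int_{|z|\ge 1}\ln|z|\,{\rm d}\mu_{A_k^{-1}B_k}$ by Weyl's inequality (Lemma \ref{res:Weyl}), splits $A_k^{-1}B_k=(D_kA_k)^{-1}(D_kB_k)$ using the product inequality of Lemma \ref{res:sv_product_matrices}, invokes the interlacing Theorem \ref{thm:Interlacing} to see that all but the top $n$ singular values of $D_kB_k$ are at most $1$, bounds those $n$ values crudely by $\|D_kB_k\|\le e^k\sqrt{S_k}$ via Chebyshev on the Frobenius norm, and handles $(D_kA_k)^{-1}$ through the small-ball estimate on $\sigma_n(C_k/\sqrt n)$ from Theorem \ref{res:full_random_matrices}-4 plus Borel--Cantelli. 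You bypass the companion pencil entirely: the Mahler-measure/Jensen identity for $\det P_{n,k}$ on the unit circle, combined with AM--GM, concavity of $\ln$, and Parseval, reduces everything to $\frac{1}{2k}\ln\bigl(\frac1n\sum_j|\alpha_j^{(k)}|^2\|C_j^{(k)}\|_F^2\bigr)$, which Markov plus a union bound and Borel--Cantelli make $O(1)$ under the hypothesis $\ln\bigl(1+\sum_j|\alpha_j^{(k)}|^2\bigr)=O(k)$; the term $\ln|\det C_k^{(k)}|/(nk)$ is controlled from below by the same small-ball bound on $\sigma_n(C_k^{(k)})$ (your exponent threshold $s>(5P+2)/4$ is exactly what summability requires for $n=O(k^P)$). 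Both proofs ultimately rest on Theorem \ref{res:full_random_matrices}-4 and Borel--Cantelli; your route is arguably cleaner since it avoids $D_k$ and the interlacing argument, while the paper's route keeps the proof structurally parallel to Lemma \ref{res:log_uia}. One small correction to your closing remark: it is only the second-moment estimate $\int x^2\,{\rm d}\nu_{D_kB_k}$ of Lemma \ref{res:log_uia} that breaks down when $\sum_j|\alpha_j^{(k)}|^2=e^{O(k)}$; a product-type estimate does not intrinsically blow up, and indeed the paper makes one work by noting that only $n$ of the $nk$ singular values of $D_kB_k$ exceed $1$, so the $e^{O(k)}$ norm contributes $O(1)$ after division by $nk$.
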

\begin{proof}
	Recall that
	\[
A_k:= 
\begin{bmatrix}
C_{k} &  & & \\
&I_n & &  \\
&& \ddots  & \\
&& & I_n  
\end{bmatrix} 
,\qquad 
B_k:=\begin{bmatrix}
-\alpha^{(k)}_{k-1}C_{k-1} & \dots & -\alpha^{(k)}_{1}C_1 & -\alpha^{(k)}_{0}C_0\\
I_n & & & \\
& \ddots & & \\
& & I_n & 
\end{bmatrix}
	\]
	and due to item $4.$ in Theorem \ref{res:full_random_matrices}, we can always consider $A_k$ invertible. Let
	\[
	D_k:= 
	\begin{bmatrix}
	\frac 1{\sqrt{n}} I_n&  & & \\
	&I_n & &  \\
	&& \ddots  & \\
	&& & I_n  
	\end{bmatrix} 
	\]
	and notice that $A_k^{-1}B_k = (D_kA_k)^{-1} (D_kB_k)$.
	\\
	
	\noindent Using Lemma \ref{res:Weyl} and 
	Lemma
	\ref{res:sv_product_matrices}, for any $\alpha>0$, 
	\begin{align*}
	\int_{|z|\ge 1} \ln |z| \, {{\rm d}} \mu_{A_k^{-1}B_k}
	& = 
	\frac 1{nk} 
	\sum_{|\lambda_i| \ge 1} \ln|\lambda_i(A_k^{-1}B_k)| = 
	\frac 1{nk} 
	\ln \left(
	\prod_{|\lambda_i| \ge 1} |\lambda_i(A_k^{-1}B_k)|
	\right)\\
	& \le 
	\frac 1{nk} 
	\ln \left(
	\prod_{|\lambda_i| \ge 1} \sigma_i(A_k^{-1}B_k)
	\right)  \le 
	\frac 1{nk} 
	\ln \left(
	\prod_{\sigma_i \ge 1} \sigma_i(A_k^{-1}B_k)
	\right)\\
	&\le 
	\frac 2{nk} 
	\ln \left(
	\prod_{\sigma_i(A_k^{-1}B_k)\ge 1} \sigma_i((D_kA_k)^{-1})\sigma_i(D_kB_k)
	\right)\\
	&  \le 
	\frac 2{nk} 
	\ln \left(
	\prod_{\sigma_i \ge 1} \sigma_i((D_kA_k)^{-1})
	\right) 
	+
	\frac 2{nk} 
	\ln \left(
	\prod_{\sigma_i \ge 1} \sigma_i(D_kB_k)
	\right) \\
	&=2 
	\int_{x\ge 1} \ln x \, {{\rm d}} \nu_{(D_kA_k)^{-1}}  
	+2
	\int_{x\ge 1} \ln x \, {{\rm d}} \nu_
	{D_kB_k}\\
	&\le
	2 c_\alpha + 2
	\int_{\f R^+} x^{-\alpha} \, {{\rm d}} \nu_{D_kA_k}  
	+2
	\int_{x\ge 1} \ln x \, {{\rm d}} \nu_
	{D_kB_k},
	\end{align*}
	where $c_\alpha$ is a constant that depends only on $\alpha$. Observe that $D_kB_k$ has a rectangular identity submatrix of size $(k-1)n\times kn$. 
	
	If $S_k=1+\sum_{s = 0, \dots, k-1}
	|\alpha_s^{(k)}|^2$, then by assumption $\ln(S_k) = O(k)$. 
	Thanks to Theorem \ref{thm:Interlacing}, we thus know that all but its largest $n$ singular values are bounded by $1$. We can crudely bound the largest singular values with the spectral norm of $D_kB_k$ by 
	\[
	\f P[\|D_kB_k\| >  e^k\sqrt {S_k}] \le 
	\f P[\|D_kB_k\|_F^2 > e^{2k} {S_k} ]\le 
	\frac {n(k-1) + n S_k  }{e^{2k} {S_k}}.
	\]
Therefore, since $n(k) = O(k^P)$, we have that $\|D_kB_k\| \le e^k\sqrt {S_k}$ up to a space of probability $O(e^{-k})$. 
	As a consequence,
	\[
	\int_{x>1} \ln x \, {{\rm d}}\nu_{D_kB_k}
	=
	\frac{1}{kn}
	\sum_{i=1}^{n} \ln (\sigma_i(D_kB_k))
	\le
	\frac{\ln(e^k\sqrt {S_k})}{k} = O(1).
	\] 
Since $n(k)=O(k^P)$, we can use item $4.$ in Theorem \ref{res:full_random_matrices} to see that
		\[
		\f P [ \sigma_{n}(C_k/\sqrt{n})\le k^{-1-\frac 74P}  ]\le cn(k)^{\frac 72}
		\|f\|_{\infty} k^{-2-\frac 72P} =O(k^{-2}).
		\]
		Thus, up to a space of probability $O(k^{-2})$, 
		\begin{align}\label{eq:bound_A_as}
		\int x^{-\alpha} \,{{\rm d}}\nu_{D_kA_k} & =
		\frac 1k 	\int x^{-\alpha} \,{{\rm d}}\nu_{C_k/\sqrt n} + \frac{k-1}{k} 
		\le
		\frac 1k 	 k^{(1+\frac 74P)\alpha}  + \frac{k-1}{k} \le 2
		\end{align} 
		for any $\alpha \le 1/(1+\frac 74P)$. By Borel-Cantelli Lemma we conclude that
			\begin{align*}
		\int_{|z|\ge 1} \ln |z| \, {{\rm d}} \mu_{A_k^{-1}B_k}
		& \le
		2 c_\alpha + 2
		\int_{\f R^+} x^{-\alpha} \, {{\rm d}} \nu_{D_kA_k}  
		+2
		\int_{x\ge 1} \ln x \, {{\rm d}} \nu_
		{D_kB_k} = O(1)
		\end{align*} almost surely.	
\end{proof}

\end{document}